\documentclass[11pt,reqno]{amsart}
\usepackage{amsmath,amssymb, mathrsfs, stmaryrd, comment}
\usepackage{graphicx, color, appendix}
\usepackage{hyperref}

\newcommand{\pl}{\partial}

\newcommand{\rw}{\rightarrow}

\newcommand{\R}{\mathbb{R}}

\newcommand{\lt}{\left}
\newcommand{\rt}{\right}
\newcommand{\na}{\mathring{\nabla}}
\newcommand{\cb}{\underline{c}}
\newcommand{\Fb}{\underline{F}}

\newtheorem{theorem}{Theorem}
\newtheorem{proposition}[theorem]{Proposition}

\newtheorem{lemma}[theorem]{Lemma}

\theoremstyle{definition}
\newtheorem{definition}[theorem]{Definition}
\newtheorem{remark}[theorem]{Remark}

%\usepackage{apptools, ntheorem}
%\AtAppendix{\renewtheorem{definition}{Definition}[section]
%\renewtheorem{proposition}[definition]{Proposition}
%\renewtheorem{lemma}[definition]{Lemma}}

\begin{document} 

\title[Angular Momentum and Center-of-Mass at Null Infinity]{Evaluating Quasi-local Angular Momentum and Center-of-Mass at Null Infinity}

\author{Jordan Keller}
\address{Jordan Keller\\
Black Hole Initiative\\
Harvard University, USA}
\email{jordan\_keller@fas.harvard.edu}

\author{Ye-Kai Wang}
\address{Ye-Kai Wang\\Department of Mathematics\\National Cheng Kung University, Taiwan}
\email{ykwang@mail.ncku.edu.tw}

\author{Shing-Tung Yau}
\address{Shing-Tung Yau\\Department of Mathematics\\Harvard University, USA}
\email{yau@math.harvard.edu}

\thanks{Portions of this work were carried out during a visit to the Yau Mathematical Sciences Center at Tsinghua University.  The first author wishes to thank this institution for its hospitality.  In addition, the same author thanks the John Templeton Foundation for its support.  The authors wish to thank Po-Ning Chen and Mu-Tao Wang for helpful comments and stimulating conversations related to this work. S.-T. Yau is partially supported by NSF grant DMS-160781 and Simons Foundation 385581. Y.-K. Wang is supported by Taiwan MOTS grants 105-2115-M-006-016-MY2 and 107-2115-M-006-001-MY2.}

\begin{abstract}
We calculate the limits of the quasi-local angular momentum and center-of-mass defined by Chen-Wang-Yau \cite{CWY} for a family of spacelike two-spheres approaching future null infinity in an asymptotically flat spacetime admitting a Bondi-Sachs expansion.  Our result complements earlier work of Chen-Wang-Yau \cite{CWY_null}, where the authors calculate the limits of the quasi-local energy and linear momentum at null infinity.  Finiteness of the center-of-mass limit requires that the spacetime be in the so-called center-of-mass frame, a mild assumption on the mass aspect function amounting to vanishing of linear momentum at null infinity.  With this condition and the assumption that the Bondi mass is non-trivial, we obtain explicit expressions for the angular momentum and center-of-mass at future null infinity in terms of the observables appearing in the Bondi-Sachs expansion of the spacetime metric. 
\end{abstract}

\maketitle

%%%%%%%%%%%%%%%%%%%%%%%%%%%%%%%%%
\section{Introduction}
\subsection{}
In general relativity, the gravitational fields of isolated systems are modeled by asymptotically flat spacetimes.  Such systems emit gravitational waves \cite{LIGO} which travel at the speed of light and eventually reach future null infinity of the spacetime.  In the study of gravitational waves, it is desirable to find a suitable notion of conserved quantities defined at null infinity. While we have the well-accepted Bondi-Sachs energy-momentum, there is no such consensus for angular momentum or center-of-mass integral. To shed light on this problem, we calculate the limits of the quasi-local angular momentum and center-of-mass integral defined by Chen-Wang-Yau \cite{CWY} for a family of spacelike two-spheres approaching future null infinity in an asymptotically flat spacetime $N$ admitting a Bondi-Sachs coordinate system. We start this introduction by briefly reviewing the Bondi-Sachs coordinates.

Concurrent work of Bondi, van der Burg, and Metzner \cite{BVM} and Sachs \cite{Sachs} introduced the Bondi-Sachs coordinates in order to clarify the nature of gravitational radiation. The coordinate system is built on the geometry of outgoing null hypersurfaces. In particular, the luminosity distance of null geodesics $r$ is taken as a coordinate function. Let $N$ be a vacuum spacetime with metric given in Bondi-Sachs coordinates $(u,r,x^{A})$ by
\begin{align*}
-UVdu^2 - 2Ududr+\sigma_{AB}(dx^{A}+W^{A}du)(dx^{B}+W^{B}du)
\end{align*}
where we demand 
\begin{align*}
\det \left(\frac{\sigma_{AB}}{r^2}\right) = \det(\mathring\sigma_{AB}),
\end{align*}
where $\mathring\sigma_{AB}$ is the round metric on the unit sphere. Denote the level sets of $u$ by $\mathcal{C}_u$. Assuming that the metric coefficients can be expanded into power series in $\frac{1}{r}$, the above determinant condition implies 
\[ \sigma_{AB} = r^2 \mathring\sigma_{AB} + r C_{AB} + O(1),\]
where $C_{AB}(u, x^D)$ is a symmetric traceless two-tensor on $S^2$, referred to as the shear tensor.  Moreover, the null vacuum constraint equations enjoy a remarkable hierarchy that allows us to solve the metric coefficients on $\mathcal{C}_u$ order by order, with $C_{AB}$ being free data. In particular,
\begin{align*}
\begin{split}
U &= 1-\frac{1}{16r^2}\left(C_{DE}C^{DE}\right) + O(r^{-3}),\\
V & = 1 - \frac{2m}{r} + O(r^{-2}),\\
W^{A} &= \frac{1}{2r^2}\mathring\nabla^{D}C_{D}^{A} + \frac{1}{r^3}\left(\frac{2}{3}N^{A} - \frac{1}{16}\mathring\nabla^{A}\left(C_{DE}C^{DE}\right) - \frac{1}{2}C^{A}_{B}\mathring\nabla^{D}C_{D}^{B}\right) \\
&\quad + O(r^{-4}).
\end{split}
\end{align*} 
Here and hereafter, tensor contraction is performed with respect to $\mathring\sigma_{AB}$, unless otherwise noted.  The function $m(u,x^A)$ and the spherical vector $N^A(u,x^D)$ are referred to as the mass aspect and the angular momentum aspect, respectively.  By comparison with static solutions, the Bondi-Sachs energy-momentum 4-vector $(e, p^1, p^2, p^3)$ is defined by
\begin{align}
e &= \frac{1}{4\pi}\int_{S^2} m, \label{BondiMass}\\
p^i &= \frac{1}{4\pi}\int_{S^2} m \tilde X^i, \label{LinearMomentum}
\end{align}
where $\tilde X^i$ are the first eigenfunctions on $(S^2, \mathring\sigma)$.  The positive mass theorem \cite{Schoen-Yau, Horowitz-Perry} asserts that the Bondi-Sachs energy-momentum 4-vector is future-directed timelike if there is a complete spacelike hypersurface intersecting null infinity in the given cut such that the dominant energy condition is satisfied and has nonflat domain of dependence. In particular, $e > 0$.

\subsection{}
To put the Bondi-Sachs energy-momentum on physical grounds and to seek the correct definitions of angular momentum, it is necessary to understand the symmetries of null infinity $\mathscr{I}^+$: the BMS group and its Lie algebra the BMS algebra. The BMS algebra consists of vector fields
\begin{align*}
(f + uY^1 )\frac{\pl}{\pl u} + Y^A \frac{\pl}{\pl x^A}   
\end{align*}
 on $\mathscr{I}^+ = (-\infty,\infty) \times S^2$ where $f$ is an arbitrary function on $S^2$ and $Y^A \frac{\pl}{\pl x^A}$ is a conformal Killing vector field on $S^2$, $\na^A Y^B + \na^B Y^A = 2Y^1 \mathring\sigma_{AB}$. 
The BMS algebra is similar to the Poincar\'{e} algebra but contains an infinite-dimensional abelian subalgebra, the {\it supertranslations}
$f \frac{\pl}{\pl u} $
instead of the 4-dimensional translations where $f$ is taken to be mode 0 and 1 spherical harmonics. The quotient of the supertranslations is again isomorphic to the Lorentz algebra. The fact that the Lorentz algebra sits in the BMS algebra in infinitely dimensional ways by conjugation with supertranslations, referred as {\it supertranslation ambiguity} in physics literature,  is the major impediment of defining angular momentum at null infinity that transforms covariantly as in special relativity. For supertranslation ambiguity, we refer to \cite{ADK, Moreschi04} for an explanation.

The first definition of conserved quantities with respect to the full BMS algebra was the Winicour-Tamburino \cite{Winicour} linkage. They consider a propagation law of the BMS vector fields and define the linkage as a modification of Komar's integral, see \cite[Section V]{Winicour}. The expression in terms of Bondi-Sachs data is given in \cite[(8.16)]{Winicour}:  
\begin{align*}
\frac{1}{8\pi} \int_{S^2} Y^A (N_A - \frac{1}{4}C_{AB} \na_D C^{BD} - \frac{1}{16} \na_A (C_{DE}C^{DE}) ) + f (m+ \frac{1}{4} \na^A\na^B C_{AB}). 
\end{align*}
Other early proposals are Bramson \cite{Bramson} and Prior \cite{Prior}. 

The turning point of the story is Ashtekar-Strubel \cite{Ashtekar-Streubel}. They showed that the BMS group induced canonical transformations on the phase space and the corresponding Hamiltonians can be interpreted as fluxes of conserved quantities. This is the first definition that vanishes when there is no gravitational radiations \cite{Ashtekar-Winicour}. Next, by extending Penrose's definition of quasi-local angular momentum based on twistor method \cite{Pen_quasilocal}, Dray-Strubel \cite{Dray-Streubel} defined a new set of conserved quantities for the full BMS algebra. The follow-up independent works Dray \cite{Dray} and Shaw \cite{Shaw} showed that the flux of Dray-Strubel conserved quantities is the Ashtekar-Streubel flux. Finally, Wald-Zoupas \cite{Wald-Zoupas} recovered the Dray-Strubel conserved quantities via the Hamiltonian framework, thus providing them a physical interpretation. We recommend Section III of Flanagan-Nichols \cite{Flanagan-Nichols} for an excellent exposition. In particular, one finds in \cite[(3.5)]{Flanagan-Nichols} the definition of Dray-Streubel conserved quantities in terms of Bondi-Sachs data 
\begin{align*}
\frac{1}{8\pi} \int_{S^2} Y^A ( N_A  - \frac{1}{4} C_{AB} \na_D C^{BD} - \frac{1}{16} \na_A (C_{DE}C^{DE}) ) + (f + uY^1)(2m).
\end{align*}

To close this subsection, we mention two important related works. First, under the framework for stability of Minkowski spacetime introduced by Christodoulou-Klainerman \cite{CK}, Rizzi proposed a definition of angular momentum at null infinity \cite{R_thesis, R_def}. Second, Chru\`{s}ciel-Jezierski-Kijowski \cite{CJK} developed a Hamiltonian formalism for a description of field theories in radiation regime and obtained a definition of angular momentum and center-of-mass integral at null infinity. In terms of Bondi-Sachs data, they are given by \cite[(6.117)]{CJK}
\begin{align*}
\frac{1}{8\pi} \int_{S^2} Y^A ( N_A - \frac{1}{4} C_{AB} \na_D C^{BD} - \frac{1}{16} \na_A (C_{DE}C^{DE}) ).
\end{align*}

\subsection{}
In this work, we study the limits of Chen-Wang-Yau quasi-local angular momentum and center-of-mass integral \cite{CWY}. The definition, reviewed in Section 2.2, applies to arbitrary spacelike 2-surface $\Sigma$ with spacelike mean curvature vector. It is based on the surface Hamiltonian extracted from the Hilbert-Einstein gravitation action. The novelty of Chen-Wang-Yau definition is that it involves a reference term whose value depends on the isometric embedding of $\Sigma$ into the Minkowski spacetime. Solving the {\it optimal isometric embedding equation} requires the (Hodge) decomposition of the shear tensor $C_{AB} = F_{AB} + \Fb_{AB}$ where 
\begin{align*}
F_{AB} &= \na_A \na_B c - \frac{1}{2} \mathring\Delta c \mathring\sigma_{AB} \\
\Fb_{AB} &= \frac{1}{2} ( \mathring\epsilon_{AD} \na_B \na^D \cb + \mathring\epsilon_{BD} \na_A \na^D \cb). 
\end{align*}
 
Our main results, Theorems \ref{main_angmom} and \ref{main_com}, state the following:
\begin{theorem}
Fix a null hypersurface $\{ u=u_0 \}$ and denote the level set of $r$ on $\{ u=u_0\}$ by $\Sigma_r$. Assuming that $\{ u=u_0\}$ has vanishing linear momentum, then the limits of  angular momentum $J^k$ and center-of-mass integral $C^i$ of $\Sigma_r$ are given by  
\begin{align*}
J^{k} = \frac{1}{8\pi}\int_{S^2}& \mathring\epsilon^{AB} \na_B \tilde X^k \lt( N_{A} -\frac{1}{4}C_{AB}\mathring\nabla_{D}C^{DB} - c\mathring\nabla_{A}m   \rt) .
\end{align*}
and
\begin{align*}
\begin{split}
C^i &= \frac{1}{8\pi e} \int_{S^2} \mathring\nabla^{A}\tilde{X}^{i} \Bigg[ N_A - \frac{1}{4} C_{AB} \na_D C^{DB} - \frac{1}{16} \na_A \lt( C^{DE}C_{DE}\rt) \\
&\qquad\qquad\qquad\qquad\qquad\qquad\qquad\qquad - c \na_A m  + 2 \mathring\epsilon_{AB} (\na^B \cb) m \Bigg]\\
&\quad  + \frac{1}{8\pi e} \int_{S^2}  3 \tilde X^i cm - \frac{1}{4} \tilde X^i \na_A \Fb^{AB} \na^D \Fb_{DB},
\end{split}
\end{align*}
where $\mathring\epsilon_{AB}$ denotes the area form with respect to $\mathring\sigma_{AB}$.
\end{theorem}

Comparing with Dray-Streubel's definition, Chen-Wang-Yau's definition contains correction terms that result from the optimal isometric embedding equations. In subsequent work joint with Po-Ning Chen and Mu-Tao Wang \cite{CKWWY}, we show that Chen-Wang-Yau's angular momentum and center-of-mass integral are distinguished by their supertranslation invariance of total flux.
 
\begin{figure}[t]
\caption{The null hypersurface $\{ u=u_0\}$ and the spherical section $\Sigma_r$}
\centering
\includegraphics[scale=1]{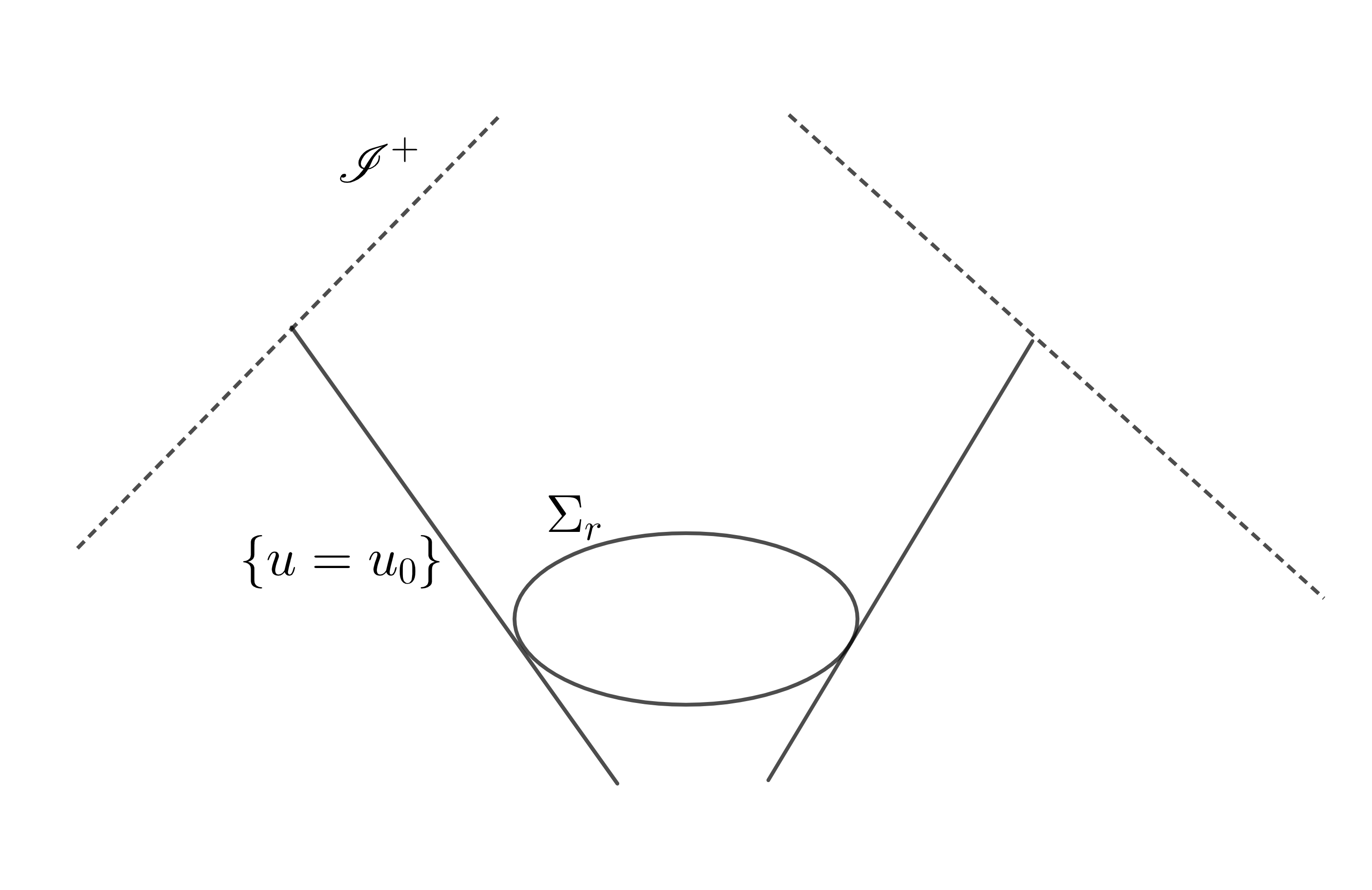}\end{figure} 

The rest of the paper is organized as follows. In Section 2, we expand the metric coefficients in Bondi-Sachs coordinates to the necessary order and review the definition of Chen-Wang-Yau conserved quantities. In Sections 3 and 4, we compute the physical data and the reference data in Chen-Wang-Yau definitions respectively. In Section 5, we discuss the center-of-mass-frame and solve the optimal isometric embedding equation. The angular momentum and center-of-mass integral are evaluated in Sections 6 and 7 respectively. In Section 8, it is shown that in Kerr spacetime, we recover the expected values of angular momentum and center-of-mass integral. There are two appendices on useful identities and facts of symmetric traceless 2-tensors on $S^2$.

%%%%%%%%%%%%%%%%%%%%%%%%%%%%%%%%%%%%%%%%%%%
\section{Preliminaries}
\subsection{Bondi-Sachs Coordinates and Asymptotic Expansion}

Let $N$ be a spacetime with metric $g$ given in Bondi-Sachs coordinates $(u,r,x^{A})$ by
\begin{equation}\label{CWYform}
g = -UVdu^2 - 2Ududr+\sigma_{AB}(dx^{A}+W^{A}du)(dx^{B}+W^{B}du)
\end{equation}
and satisfy the {\it determinant condition}
\begin{equation}\label{areaNorm}
\det \left(\frac{\sigma_{AB}}{r^2}\right) = \det(\mathring\sigma_{AB}),
\end{equation}
where $\mathring\sigma_{AB}$ is the round metric on the unit sphere.  In this way, the spacetime metric above has exactly six degrees of freedom.  For future reference, we denote the covariant derivatives associated with $\sigma_{AB}$ and $\mathring\sigma_{AB}$ by $\nabla_{A}$ and $\mathring\nabla_{A}$, respectively.  We also note that, for vector fields $X^{A}$ on the sphere, we have 
\begin{equation}\label{divergenceRelation}
 div_{\sigma} X^{A} = div_{\mathring\sigma} X^{A}
 \end{equation}
 owing to the determinant condition \eqref{areaNorm}.

Assuming a Bondi-Sachs expansion from null infinity in powers of $\frac{1}{r}$, we have
\begin{align}\label{CWYexpansion}
\begin{split} 
U &= 1-\frac{1}{16r^2}\left(C_{DE}C^{DE}\right) + O(r^{-3}),\\
V &=1-\frac{2m}{r} + O(r^{-2}),\\
W^{A} &= \frac{1}{r^2} W^{(-2)A} + \frac{1}{r^3} W^{(-3)A} + O(r^{-4}) \\
&= \frac{1}{2r^2}\mathring\nabla^{D}C_{D}^{A} + \frac{1}{r^3}\left(\frac{2}{3}N^{A} - \frac{1}{16}\mathring\nabla^{A}\left(C_{DE}C^{DE}\right) - \frac{1}{2}C^{A}_{B}\mathring\nabla^{D}C_{D}^{B}\right)\\
&\quad + O(r^{-4}).
\end{split}
\end{align}
The function $m(u,x^{A})$ and the spherical vector $N^A(u,x^{D})$ are referred to as the mass aspect and the angular momentum aspect, respectively.  The symmetric traceless spherical two-tensor $C_{AB}(u,x^{D})$ is referred to as the shear tensor.

The expansion above is derived from the Einstein equations expressed in the Bondi-Sachs variables.  In particular, there are a hierarchy of initial data equations which can be systematically integrated to yield higher terms in the metric expansion, assuming expressions in powers of $\frac{1}{r}$ as mentioned at the outset.  See M\"{a}dler-Winicour \cite{MW} for details.

Moreover, we need a further term in the expansion of $V$ in calculating the quasi-local quantities. The result is well-known. See \cite[(3.5)]{CJM} for example. For completeness, we give a derivation in the appendix.
\begin{proposition}\label{V}
The metric function $V$ expands as
\begin{align}
V = 1 - \frac{2m}{r} + \frac{1}{r^2} V^{(-2)} + O(r^{-3})
\end{align}
where
\begin{align}\label{Vexpansion}
V^{(-2)} = \frac{1}{4} \na_A C^{AB} \na^D C_{BD} + \frac{1}{16} C_{DE}C^{DE} + \frac{1}{3} \na^A N_A .
\end{align}
\end{proposition}

\begin{remark}\label{NA_convention}
We compare different conventions on angular momentum aspect in Hawking-Perry-Strominger \cite[(2.2)]{HPS},  Chrusicel-Jezierski-Kijowski \cite[(5.99)]{CJK}, Barnich-Troessaert \cite[(4.37)]{Bar-Tro} and M\"{a}dler-Winicour \cite[(56)]{MW}. Note that these authors use $U^A = -W^A$ in their papers. Our choice features that $\lim_{r\rw\infty} R_{Arru} = N_A$.  
\begin{enumerate}
\item $N^A_{(HPS)} = N^A - u \na^A m$ 
\item $N^A_{(CJK)} = -\frac{1}{3}N^A$ 
\item $N^A_{(BT)} = N^A - \frac{3}{16} C_{BD}\na^A C^{BD} -\frac{1}{4} C^{AB} \na^D C_{DB}$ 
\item $N^A_{(MW)} = -\frac{1}{3} N^A_{(BT)}$
\end{enumerate} 
\end{remark}

\subsection{Chen-Wang-Yau quasi-local conserved quantities}
Next, we describe the quasi-local quantities to be considered, per Chen-Wang-Yau \cite{CWY}.  Let $(\Sigma, \sigma)$ be a closed embedded spacelike two-sphere in a spacetime $(N, g)$, with spacelike mean curvature vector $H$.  The physical data used in the quasi-local definition consists of the triple $(\sigma, |H|, \alpha_{H})$, where $\sigma$ is the induced metric of $\Sigma$, $|H|$ is the norm of its mean curvature vector, and $\alpha_{H}$ is the connection one-form of the normal bundle with respect to the mean curvature vector; that is,
\[\alpha_{H}(\cdot) = g(\nabla^{N}_{(\cdot)}\frac{J}{|H|}, \frac{H}{|H|}),\]
where $J$ is the future-directed timelike vector obtain via reflection of $H$ through the incoming light cone in the normal bundle.

The reference data used in the quasi-local definition is specified with respect to an isometric embedding $X : (\Sigma,\sigma) \hookrightarrow (\R^{3,1}, \eta)$ of $\Sigma$ into the Minkowski spacetime.  In terms of the image surface $X(\Sigma)$ in $\R^{3,1}$, we have the reference data $(|H_0|, \alpha_{H_0})$, analogous to the physical data.

Let $(t,x^1, x^2, x^3)$ be a Cartesian coordinate system on the Minkowski spacetime $(\R^{3,1},\eta)$.  We take $T_0$ to be a future-directed, unit timelike vector field in the Minkowski spacetime, interpreted as an observer, and split $X = (X^0, X^1, X^2, X^3)$ into temporal and spatial components.  We define the height function 
\begin{equation}\label{tauDef}
\tau := -\langle T_0, X \rangle
\end{equation}
in terms of $T_0$ and $X$.  Further, we define the density function
\begin{equation}\label{densityDef}
\rho := \frac{\sqrt{|H_0|^2 + \frac{(\Delta \tau)^2}{1+|\nabla \tau|^2}} - \sqrt{|H|^2 + \frac{(\Delta \tau)^2}{1+|\nabla \tau|^2}}}{\sqrt{1+|\nabla\tau|^2}}
\end{equation}
and the current one-form
\begin{equation}\label{currentDef}
j := \rho\nabla \tau -\nabla\left[\sinh^{-1}\left(\frac{\rho\Delta\tau}{|H_0||H|}\right)\right] -\alpha_{H_0} + \alpha_{H}
\end{equation}
on $\Sigma$. Note that we identify $\Sigma$ and $X(\Sigma)$ implicitly by the isometric embedding $X.$ In particular, $|H_0|, \alpha_{H_0}$ and $\tau$ stand for $|H_0| \circ X, X^*(\alpha_{H_0})$ and $\tau \circ X$ in the definitions \eqref{densityDef} and \eqref{currentDef}.
 
There are many such choices of isometric embedding $X$ and observer $T_0$ in the Minkowski spacetime.  Per Chen-Wang-Yau \cite{CWY}, we consider only those pairs $(X, T_0)$ with associated data satisfying the optimal isometric embedding equation
\begin{equation}\label{oiee}
div_{\sigma}j = 0.
\end{equation}

The quasi-local angular momentum and center-of-mass integral are defined with respect to rotation and boost Killing fields in Minkowski spacetime, respectively the images of the Lorentz Killing vector fields
\begin{equation}
K_{i,j} := x^i\partial_{j} - x^j\partial_{i},\ i, j = 1, 2, 3;\  i < j
\end{equation}
and \begin{equation}
K_i := x^i\partial_{t} + t\partial_{i},\ i = 1,2,3
\end{equation}
under Lorentz transformations.

Per Chen-Wang-Yau \cite{CWY}, the quasi-local center-of-mass and angular momentum are defined as follows:
\begin{definition}\label{qlCharge}
Given a surface $(\Sigma, \sigma)$ in $(N,g)$, suppose that the pair $(X, T_0)$ provides an isometric embedding of $\Sigma$ into Minkowski spacetime such that the optimal isometric embedding equation \eqref{oiee} is satisfied.  Writing $T_0 = A(\partial_{t}) = A((1,0,0,0))$ for a Lorentz transformation $A$, we define the components of the quasi-local angular momentum by
\begin{equation}\label{qlAM}
J^k(\Sigma,X,T_0)\\ =-\frac{\epsilon^{ijk}}{8\pi}\int_{\Sigma}[\langle A(K_{i,j}), T_0 \rangle \rho + (A(K_{i,j}))^{T}\cdot j]d\Sigma
\end{equation}
and the components of the quasi-local center-of-mass integral by
\begin{equation}\label{qlCOM}
C^i(\Sigma,X,T_0)\\ =-\frac{1}{8\pi e}\int_{\Sigma}[\langle A(K_{i}), T_0 \rangle \rho + (A(K_{i}))^{T}\cdot j]d\Sigma,
\end{equation}
where $\langle \cdot, \cdot \rangle$ denotes the Minkowskian inner product and $K^{T}$ denotes the projection of a Lorentz Killing field $K$ onto the tangent space of the image $X(\Sigma)$, such that
\begin{equation}\label{projectionFormula}
K^{T} = \langle K, \partial_{A}X \rangle \sigma^{AC}\partial_{C}.
\end{equation}
In the angular momentum expression, we make use of the volume form $\epsilon_{ijk}$ of $\R^3$ written with respect to the coordinates $(x^1, x^2, x^3)$, such that $k = 1, 2,3; k \neq i, j$.
\end{definition}

%%%%%%%%%%%%%%%%%%%%%%%%%%%%%%%%%%%%%%%%%
\section{The Physical Data}
We fix a null hypersurface $\{ u = u_0\}$ and study the surfaces $(\Sigma_{r}, \sigma_{r})$ with constant luminosity distance in the null hypersurface. Per Chen-Wang-Yau \cite{CWY_null} and the Bondi-Sachs expansion \eqref{CWYexpansion}, the physical data is specified by 
\begin{equation}\label{metricExpansion}
\sigma_{AB,r} = r^2\mathring\sigma_{AB} + rC_{AB} + \frac{1}{4}(C_{DE}C^{DE})\mathring\sigma_{AB} + O(r^{-1}),
\end{equation}
\begin{equation}\label{meanCurv}
|H_{r}|^2 = \frac{4}{Ur}\left(\frac{V}{r}+div_{\sigma}W\right),
\end{equation}
\begin{align}\label{connectionForm}
\begin{split}
g(\nabla^{N}_{\partial_{A}}J_{r},H_{r}) &= -\frac{2}{rU}\partial_{A}\left(\frac{V}{r}+div_{\sigma} W\right)\\
&+ \frac{2}{rU^2}\left(\frac{V}{r}+div_{\sigma} W\right)\sigma_{AC}\partial_{r}W^{C}.
\end{split}
\end{align}

We rewrite \eqref{meanCurv} and \eqref{connectionForm} in terms of the Bondi-Sachs expansion \eqref{CWYexpansion} in the following proposition:

\begin{proposition}
With respect to the Bondi-Sachs expansion \eqref{CWYexpansion}, the norm of the mean curvature \eqref{meanCurv} expands as
\begin{align}\label{H}
\begin{split}
&|H_{r}| = \frac{1}{r}|H|^{(-1)} + \frac{1}{r^2}|H|^{(-2)} + \frac{1}{r^3}|H|^{(-3)} + O(r^{-4})\\
&= \frac{2}{r} - \frac{2m}{r^2} + \frac{1}{2r^2}\left(\mathring\nabla^{D}\mathring\nabla^{E}C_{DE}\right) \\
&\quad + \frac{1}{4r^3}\na_A C^{AB} \na^D C_{BD} + \frac{1}{8r^3}(C_{DE}C^{DE}) + \frac{1}{r^3} \na^E N_E  \\ &\quad - \frac{1}{16r^3} \mathring\Delta (C_{DE}C^{DE}) - \frac{1}{2} \na_A \lt( C^A_B \na^D C^B_D \rt)\\
&\quad - \frac{m^2}{r^3} + \frac{1}{2r^3} m \lt( \na^D\na^E C_{DE} \rt) - \frac{1}{16r^3} \lt( \na^D\na^E C_{DE}\rt)^2 + O(r^{-4}),
\end{split}
\end{align}
and the connection one-form \eqref{connectionForm} expands as
\begin{align}\label{alphaH}
\begin{split}
&\alpha_{H,r}(\partial_{B}) =  \frac{1}{r}\alpha_{H}^{(-1)}(\partial_{B}) + \frac{1}{r^2}\alpha_{H}^{(-2)}(\partial_{B}) + O(r^{-3})\\
&=\frac{1}{r}\mathring\nabla_{B}m - \frac{1}{4r}\mathring\nabla_{B}\left(\mathring\nabla^{D}\mathring\nabla^{E}C_{DE}\right) - \frac{1}{2r}\mathring\nabla^{A}C_{AB}\\
&\quad + \frac{1}{4r^2}C_{BD} \na_E C^{DE} - \frac{1}{r^2}N_B + \frac{3}{32r^2}\na_B(C_{DE}C^{DE}) +\frac{1}{r^2} \na_B \mathfrak{f} + O(r^{-3}).
\end{split}
\end{align}
Here the one-form $\na_B \mathfrak{f}$ comes from the $\frac{1}{r^2}$ order of $-\frac{1}{2}\pl_B \log \lt( \frac{V}{r} + div_\sigma W \rt)$:
\begin{align}
\begin{split}
\mathfrak{f} &= -\frac{1}{2} \na_A N^A + \frac{1}{32} \mathring\Delta (C_{DE}C^{DE}) - \frac{1}{8}\na_A C^{AB} \na^D C_{BD}\\
& - \frac{1}{32}C_{DE}C^{DE} +\frac{1}{4} \lt( 2m - \frac{1}{2}\na^D\na^E C_{DE} \rt)^2 +\frac{1}{4}\mathring\nabla_A(C^{AD}\mathring\nabla^{E}C_{ED}).
\end{split}
\end{align}  
\end{proposition}

\begin{proof}
From \eqref{meanCurv} and \eqref{CWYexpansion}, we get
\begin{align*}
|H_r|^{(-2)} &= -2m + \na_A W^{A(-2)},\\
|H_r|^{(-3)} &= V^{(-2)} + \na_A W^{A(-3)} + \frac{1}{16} C_{DE}C^{DE} - m^2 + m \na_A W^{A(-2)} \\
&\quad - \frac{1}{4} (\na_A W^{A(-2)})^2.
\end{align*} 
Plugging in \eqref{CWYexpansion}, and \eqref{Vexpansion}, we obtain \eqref{H}.

Regarding the connection one-form, we have from \eqref{CWYexpansion}
\begin{align*}
\alpha_{H,r}(\pl_B) &= -\frac{1}{2} \pl_B \log \lt( \frac{V}{r} + div_\sigma W \rt) + \frac{1}{2U} \sigma_{BD} \pl_r W^D \\
&=-\frac{1}{2} \pl_B \log \lt( 1 - \frac{2m}{r} + \frac{1}{r} \na_D W^{D(-2)} + \frac{1}{r^2}V^{(-2)} + \frac{1}{r^2} \na_D W^{D(-3)}\rt)\\
&\quad - \frac{1}{r}W^{(-2)}_B - \frac{1}{r^2} C_{BD} W^{D(-2)} - \frac{3}{2r^2} \mathring\sigma_{BD} W^{D(-3)}.
\end{align*}
Plugging in \eqref{CWYexpansion}, and \eqref{Vexpansion}, we obtain \eqref{alphaH}.

\end{proof}
\section{The Reference Data}

We consider isometric embeddings $X_r : (\Sigma_r,\sigma_r) \hookrightarrow (\R^{3,1}, \eta)$, with components $X_{r} = (X^0_{r}, X^1_{r}, X^2_{r}, X^3_r)$ expanding according to the ansatz
\begin{align}\label{embeddingExpansion}
\begin{split}
X^0_{r} &= X^{0(0)} + \frac{1}{r}X^{0(-1)} + O(r^{-2}),\\
X^i_{r} &= r\tilde{X}^i + X^{i(0)} + \frac{1}{r}X^{i(-1)} + O(r^{-2}),
\end{split}
\end{align}
where $\tilde{X} = (\tilde{X}^1,\tilde{X}^2,\tilde{X}^3)$ are the standard coordinate functions on the unit sphere. The leading order of $X^i_r$ is chosen according to the fact that $\Sigma_r$ is asymptotically a round sphere of radius $r$. The isometric embedding equation
\begin{equation}\label{IE}
dX^i_{r,A} \cdot dX^i_{r,B} = \sigma_{AB,r} + dX^0_{r,A}\cdot dX^0_{r,B},
\end{equation}
and the metric expansion \eqref{metricExpansion} imply the linearized equation
\begin{equation}\label{linIE}
d\tilde{X}^i_{A} \cdot dX^{i(0)}_{B} + dX^{i(0)}_{A} \cdot d\tilde{X}^i_{B} = C_{AB}.
\end{equation}

Recall from the introduction that the shear tensor $C_{AB}(u,x^{D})$ is symmetric and traceless; it is well-known that $C_{AB}(u,x^{D})$ admits the decomposition 
\begin{align}\label{CPotentials}
\begin{split}
C_{AB}(u,x^{A}) &= \left(\mathring\nabla_{A}\mathring\nabla_{B} - \frac{1}{2}\mathring\sigma_{AB}\mathring\Delta\right)c(u,x^{D})\\
&+ \frac{1}{2}\left(\mathring{\epsilon}_{AD}\mathring\nabla^{D}\mathring\nabla_{B} + \mathring{\epsilon}_{BD}\mathring\nabla^{D}\mathring\nabla_{A}\right)\underline{c}(u,x^{D}),
\end{split}
\end{align}
with scalar potentials $c(u,x^{D})$ and $\underline{c}(u,x^{D})$ and $\mathring\epsilon_{AB}$ the area form of the standard unit sphere. See Appendix B for a proof. Without loss of generality, we can assume that the potentials $c$ and $\underline{c}$ have spherical harmonic expansions with support in $\ell \geq 2$.  

As a shorthand, we write
\begin{align}
F_{AB}(u,x^{A}) &:= \left(\mathring\nabla_{A}\mathring\nabla_{B} - \frac{1}{2}\mathring\sigma_{AB}\mathring\Delta\right)c(u,x^{A}),\\
\underline{F}_{AB}(u,x^{A}) &:= \frac{1}{2}\left(\mathring{\epsilon}_{AD}\mathring\nabla^{D}\mathring\nabla_{B} + \mathring{\epsilon}_{BD}\mathring\nabla^{D}\mathring\nabla_{A}\right)\underline{c}(u,x^{A}).
\end{align}

We respectively refer to $F_{AB}$ and $\underline{F}_{AB}$ as the closed and co-closed components of $C_{AB}$, with closed and co-closed potentials $c$ and $\underline{c}$. In physics literature, they are called polar and axial parts. Our denomination comes from the fact that the divergence operator is an isomorphism from the closed (co-closed) symmetric traceless 2-tensors to closed (co-closed) 1-forms on $S^2$. See \cite[Proposition 2.4]{CKWWY}.

With this notation in place, the equation \eqref{linIE} implies
\begin{align}\label{linearizedEmbedding}
\begin{split}
X^{i(0)} &= \frac{1}{2}\left(\mathring\nabla^{A}c + \mathring\epsilon^{AB}\mathring\nabla_{B}\underline{c}\right)\mathring\nabla_{A}\tilde{X}^{i} - \frac{1}{4}\mathring\Delta c \tilde{X}^{i},\\
\mathring\nabla_{D}X^{i(0)} &= \frac{1}{2}F_{D}^{B}\mathring\nabla_{B}\tilde{X}^{i} -\frac{1}{2}\left(\mathring\nabla^{A}F_{AD}\right)\tilde{X}^{i}\\
&+\frac{1}{2}\mathring\epsilon^{AB}\mathring\nabla_{D}\mathring\nabla_{B}\underline{c}\mathring\nabla_{A}\tilde{X}^{i} - \frac{1}{2}\mathring{\epsilon}_{DB}\mathring\nabla^{B}\underline{c}\tilde{X}^{i},\\
\mathring\Delta X^{i(0)} &= -\frac{1}{2}\left(\mathring\nabla^{D}\mathring\nabla^{E}F_{DE}\right)\tilde{X}^{i} + \frac{1}{2}\mathring\epsilon^{AB}\mathring\nabla_{B}\mathring\Delta\underline{c}\mathring\nabla_{A}\tilde{X}^{i}.
\end{split}
\end{align}

Using these calculations, we proceed to expand the norm of the reference mean curvature and the connection one-form, as in the previous section.  We present the norm of the mean curvature in the following proposition:
\begin{proposition}
\begin{align}\label{H0}
\begin{split}
|H_{0,r}| &= \frac{1}{r}|H_0|^{(-1)} + \frac{1}{r^2}|H_0|^{(-2)} + \frac{1}{r^3}|H_0|^{(-3)} + O(r^{-4})\\
&=\frac{2}{r} + \frac{1}{2r^2} \na_A \na_B C^{AB} - \frac{1}{4r^3} \lt( \mathring\Delta X^{0(0)}\rt)^2 - \frac{1}{r^3} \tilde X^i \mathring\Delta X^{i(-1)} \\
&\quad - \frac{1}{2r^3} \na_A \lt( C^{AB} \na^D F_{DB}\rt) - \frac{1}{2r^3} \na_A C^{AB} \na_D \Fb^D_B \\
&\quad - \frac{1}{2r^3} C^{AB} \Fb_{AB}  + \frac{1}{16r^3} \mathring\nabla_{A}\mathring\Delta \cb\mathring\nabla^{A}\mathring\Delta \cb + \frac{1}{4r^3} \na_A C^{AB} \na_D C^D_B.
\end{split}
\end{align}
\end{proposition}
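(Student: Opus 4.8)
\section*{Proof proposal}

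The plan is to exploit the fact that for an isometric embedding $X_r$ of $(\Sigma_r,\sigma_r)$ into the flat target $(\R^{3,1},\eta)$ the mean curvature vector is the tension field, $H_{0,r} = \Delta_{\sigma_r}X_r$, i.e.\ the induced Laplace--Beltrami operator applied componentwise. Consequently
\begin{equation*}
|H_{0,r}|^2 = \eta(\Delta_{\sigma_r}X_r,\Delta_{\sigma_r}X_r) = \sum_{i=1}^3 (\Delta_{\sigma_r}X^i_r)^2 - (\Delta_{\sigma_r}X^0_r)^2,
\end{equation*}
so the entire computation reduces to expanding one operator on the ansatz \eqref{embeddingExpansion} and taking a square root. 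As a consistency check, $\Delta_{\sigma_r}(r\tilde X^i) = \tfrac1r\mathring\Delta\tilde X^i = -\tfrac2r\tilde X^i$ (using $\mathring\Delta\tilde X^i = -2\tilde X^i$) together with $\sum_i(\tilde X^i)^2 = 1$ gives the leading term $|H_{0,r}| = \tfrac2r + O(r^{-2})$.

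First I would expand $\Delta_{\sigma_r}$. The normalization \eqref{areaNorm} forces $\sqrt{\det\sigma_r} = r^2\sqrt{\det\mathring\sigma}$ \emph{exactly}, so the volume factor receives no corrections and only the inverse metric \eqref{invMetricExpansion} enters; writing $\Delta_{\sigma_r}f = \tfrac{1}{\sqrt{\mathring\sigma}}\pl_A(\sqrt{\mathring\sigma}\,\sigma_r^{AB}\pl_B f)$ one finds
\begin{equation*}
\Delta_{\sigma_r} = \frac{1}{r^2}\mathring\Delta + \frac{1}{r^3}L_1 + \frac{1}{r^4}L_2 + O(r^{-5}), \qquad L_1 = -\mathring\nabla_A\big(C^{AB}\mathring\nabla_B\,\cdot\,\big),
\end{equation*}
with $L_2$ read off from the $O(r^{-4})$ term of $\sigma_r^{AB}$. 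Applying this to $X^i_r = r\tilde X^i + X^{i(0)} + \tfrac1r X^{i(-1)}+\cdots$ and to $X^0_r$, I would substitute the linearized solution \eqref{linearizedEmbedding} for $X^{i(0)}$ and repeatedly use $\mathring\Delta\tilde X^i = -2\tilde X^i$, the Hessian identity $\mathring\nabla_A\mathring\nabla_B\tilde X^i = -\mathring\sigma_{AB}\tilde X^i$ (which annihilates $C^{AB}\mathring\nabla_A\mathring\nabla_B\tilde X^i$ by tracelessness), and $\sum_i\tilde X^i\mathring\nabla_A\tilde X^i = 0$.

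Next, collecting powers of $r$ and writing $|H_{0,r}|^2 = \tfrac{4}{r^2}\big(1 + \tfrac{\beta_3}{4r} + \tfrac{\beta_4}{4r^2}+\cdots\big)$, a binomial expansion gives $|H_0|^{(-2)} = \tfrac{\beta_3}{4}$ and $|H_0|^{(-3)} = \tfrac{\beta_4}{4} - \tfrac{\beta_3^2}{64}$. The coefficient $\beta_3 = -4\sum_i\tilde X^i\big(\mathring\Delta X^{i(0)} + L_1\tilde X^i\big)$ collapses, via the identities above and $\mathring\nabla^A\mathring\nabla^B\Fb_{AB}=0$, to $2\,\na_A\na_B C^{AB}$, reproducing the $\tfrac{1}{2r^2}\na_A\na_B C^{AB}$ term. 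The $O(r^{-3})$ part is then assembled from three sources inside $\beta_4$: the temporal piece $-(\Delta_{\sigma_r}X^0_r)^2$ yields $-\tfrac{1}{4r^3}(\mathring\Delta X^{0(0)})^2$; the cross term of the leading $-\tfrac2r\tilde X^i$ with the $O(r^{-3})$ coefficient $Q^i = \mathring\Delta X^{i(-1)} + L_1 X^{i(0)} + L_2\tilde X^i$ yields $-\tfrac{1}{r^3}\tilde X^i\mathring\Delta X^{i(-1)}$ together with quadratic-in-$C$ contributions; and the square $\tfrac14\sum_i (P^i)^2$ of the $O(r^{-2})$ coefficient $P^i = \mathring\Delta X^{i(0)} + L_1\tilde X^i$ supplies further quadratic terms, including the purely magnetic $\tfrac{1}{16}\mathring\nabla_A\mathring\Delta\cb\,\mathring\nabla^A\mathring\Delta\cb$ (using $\sum_i\mathring\nabla_A\tilde X^i\mathring\nabla_B\tilde X^i = \mathring\sigma_{AB}$ and $\mathring\epsilon^{AB}\mathring\epsilon_A{}^D = \mathring\sigma^{BD}$). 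The coefficients $X^{0(0)}$ and $X^{i(-1)}$ are left symbolic, precisely as they appear in \eqref{H0}.

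The main obstacle is the $O(r^{-3})$ bookkeeping: organizing $\tfrac{\beta_4}{4} - \tfrac{\beta_3^2}{64}$ into the five quadratic-in-$C$ terms of \eqref{H0}. This forces one to compute $L_2$ and $\sum_i(P^i)^2$ explicitly, to split $C_{AB} = F_{AB} + \Fb_{AB}$, and to commute covariant derivatives on $S^2$ (where the commutator is controlled by the unit curvature) so as to cancel the spurious $(\na_A\na_B C^{AB})^2$ coming from $\tfrac{\beta_3^2}{64}$ against terms in $\tfrac14\sum_i(P^i)^2$, and to sort the surviving electric, magnetic, and mixed pieces into the stated combination $-\tfrac12\na_A(C^{AB}\na^D F_{DB}) - \tfrac12\na_A C^{AB}\na_D\Fb^D_B - \tfrac12 C^{AB}\Fb_{AB} + \tfrac{1}{16}\mathring\nabla_A\mathring\Delta\cb\,\mathring\nabla^A\mathring\Delta\cb + \tfrac14\na_A C^{AB}\na_D C^D_B$. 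No individual step is deep, but maintaining consistency of the $\ell\ge 2$ tensor-harmonic identities across the many terms is where the computation is most delicate.
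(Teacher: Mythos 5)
Your proposal is correct and follows essentially the same route as the paper's proof: the paper likewise starts from $H_{0,r} = (\Delta X^0_r, \Delta X^i_r)$, uses the exact volume-form normalization (the divergence relation \eqref{divergenceRelation}) together with the inverse metric expansion \eqref{invMetricExpansion} to expand the Laplacians, substitutes the linearized embedding \eqref{linearizedEmbedding}, and extracts the norm so that the $\lt(\na_A\na_B C^{AB}\rt)^2$ contribution from the square root cancels against the square of the second-order coefficient, exactly as in your $\beta_4/4 - \beta_3^2/64$ bookkeeping. Your operator expansion $\Delta_{\sigma_r} = r^{-2}\mathring\Delta + r^{-3}L_1 + r^{-4}L_2 + O(r^{-5})$ is only a notational repackaging of the paper's componentwise expansions of $\Delta X^0_r$ and $\Delta X^i_r$, and the identities you invoke (including $L_2$ producing the $\tfrac{1}{2}C_{AB}C^{AB}$ term and the $F$/$\Fb$ splitting) are the ones the paper uses.
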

\begin{proof}
The reference mean curvature is given by $H_{0,r} = (\Delta X^0_{r}, \Delta X^{i}_{r})$, with associated expansions 
\begin{align*}
&\Delta X^0_{r} = \frac{1}{r^2}\mathring\Delta X^{0(0)} + \frac{1}{r^3}\mathring\Delta X^{0(-1)} - \frac{1}{r^3}\mathring\nabla_{B}\left(C^{AB}\mathring\nabla_{A}X^{0(0)}\right) + O(r^{-4}),\\
&\Delta X^{i}_{r} = -\frac{2}{r}\tilde{X}^{i} + \frac{1}{r^2}\mathring\Delta X^{i(0)} -\frac{1}{r^2}\left(\mathring\nabla_{B}C^{AB}\right)\mathring\nabla_{A}\tilde{X}^{i} \\
&\quad+  \frac{1}{r^3} \mathring{\Delta} X^{i(-1)} - \frac{1}{r^3}\na_A \lt( C^{AB} \na_B X^{i(0)} \rt) + \frac{1}{4r^3} \na_A \lt( C_{DE}C^{DE} \na^A \tilde X^i \rt),
\end{align*}
per the inverse metric expansion \begin{equation}\label{invMetricExpansion}
\sigma^{AB} = \frac{1}{r^2}\mathring\sigma^{AB} - \frac{1}{r^3}C^{AB} + O(r^{-4})
\end{equation}
and divergence relation \eqref{divergenceRelation}.
The norm of the mean curvature has expansion
\begin{align}
\begin{split}
|H_{0,r}| 
&= \frac{2}{r} +\frac{1}{2r^2}\left(\mathring\nabla^{D}\mathring\nabla^{E}F_{DE}\right) - \frac{1}{r^3}  \tilde X^j \mathring\Delta X^{j(-1)} - \frac{1}{4r^3} \lt( \mathring\Delta X^{0(0)}\rt)^2 \\
&\quad + \frac{1}{r^3} \tilde X^i \na_A \lt( C^{AB} \na_B X^{i(0)} \rt) + \frac{1}{2r^3} C_{AB}C^{AB}\\
&\quad + \frac{1}{4r^3} \lt( \mathring\Delta X^{i(0)} - \na_A C^{AB} \na_B \tilde X^i \rt)^2 - \frac{1}{16r^3} \lt( \na_A \na_B C^{AB} \rt)^2. \label{raw |H_{0,r}|}
\end{split}
\end{align}

Substituting for $X^{i(0)}$ per \eqref{linearizedEmbedding},
\begin{align*}
&\tilde X^i \na_A \lt( C^{AB} \na_B X^{i(0)} \rt) = \\
& -\frac{1}{2} C^{AB} C_{AB} - \frac{1}{2} \na_A \lt( C^{AB} \na^D F_{DB} \rt) - \frac{1}{2} \na_A \lt( C^{AB} \mathring\epsilon_{BD} \na^D \cb \rt)
\end{align*}
and
\begin{align*}
\frac{1}{4} \lt( \mathring\Delta X^{i(0)} - \na_A C^{AB} \na_B \tilde X^i \rt)^2 &= \frac{1}{16} \lt( \na_A \na_B C^{AB} \rt)^2 + \frac{1}{16} \mathring\nabla_{A} \mathring\Delta \cb \mathring\nabla^{A}\mathring\Delta \cb \\
&\quad + \frac{1}{4} \na_A C^{AB} \na_D C^D_B - \frac{1}{4} \na_A C^{AB} \mathring\epsilon_{BD} \na^D \mathring\Delta \cb.
\end{align*}
Adding the last term in each of these relations, we further rewrite
\begin{align*}
&- \frac{1}{4} \na_A C^{AB} \mathring\epsilon_{BD} \na^D \mathring\Delta \cb - \frac{1}{2} \na_A \lt( C^{AB} \mathring\epsilon_{BD} \na^D \cb \rt) \\
&=-\frac{1}{2} C^{AB} \Fb_{AB} - \frac{1}{2} \na_A C^{AB} \na_D \Fb_B^D.
\end{align*}
Substituting these reductions, we obtain \eqref{H0}.
\end{proof}

Turning to the reference connection one-form, we have the expansion:
\begin{proposition}
\begin{align}\label{alphaH0}
\begin{split}
&\alpha_{H_0,r}(\partial_{B}) = \frac{1}{r}\alpha_{H_0}^{(-1)}(\partial_{B}) + \frac{1}{r^2}\alpha_{H_0}^{(-2)}(\partial_{B}) + O(r^{-3})\\
&=\frac{1}{2r}\mathring\nabla_{B}\left(\mathring\Delta + 2\right)X^{0(0)} + \frac{1}{2r^2}\mathring\nabla_{B}\left(\mathring\Delta + 2\right)X^{0(-1)} \\
&-\frac{1}{2r^2}\mathring\nabla_{B}\left(\mathring\nabla_{A}\underline{F}^{AD}\mathring\nabla_{D}X^{0(0)}\right) -\frac{1}{2r^2}\left(\mathring\nabla_{B}\mathring\nabla_{E}X^{0(0)}\right)\left(\mathring\nabla_{D}F^{DE}\right)\\
& -\frac{1}{2r^2}\left(\mathring\nabla_{B}\mathring\nabla_{E}X^{0(0)}\right)\left(\mathring\epsilon^{ED}\mathring\nabla_{D}\underline{c}\right)-\frac{1}{8r^2}\mathring\nabla_{B}\left(\mathring\nabla^{D}\mathring\nabla^{E}C_{DE}\mathring\Delta X^{0(0)}\right)\\
& - \frac{1}{2r^2}\mathring\nabla_{B}\left(C^{AD}\mathring\nabla_{A}\mathring\nabla_{D}X^{0(0)}\right)- \frac{1}{2r^2}C^{D}_{B}\mathring\nabla_{D}X^{0(0)} \\
&+ \frac{1}{4r^2} \na^D X^{0(0)} \lt( \mathring\epsilon_{BE} \na_D \na^E \cb - \mathring\epsilon_{DE} \na_B \na^E \cb \rt)\\
&+ \frac{1}{2r^2}\mathring\nabla_{B}\left(\mathring\nabla^{D}X^{0(0)}\mathring{\epsilon}_{DE}\mathring\nabla^{E}\underline{c}\right)+ O(r^{-3}).
\end{split}
\end{align}
\end{proposition}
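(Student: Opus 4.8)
The plan is to compute $\alpha_{H_0,r}(\partial_B)$ along the same lines as the physical connection one-form \eqref{alphaH}, but now working entirely within Minkowski spacetime, where the ambient connection is flat and every covariant derivative reduces to a coordinate derivative of the embedding functions $X_r$. The starting point is the mean curvature vector $H_{0,r} = \Delta_{\sigma_r} X_r = (\Delta X^0_r, \Delta X^i_r)$, whose component expansions (together with $|H_{0,r}|$) were already produced in the proof of the previous proposition. By definition $\alpha_{H_0,r}(\partial_B) = \langle \nabla_{\partial_B}\tfrac{J_{0,r}}{|H_{0,r}|}, \tfrac{H_{0,r}}{|H_{0,r}|}\rangle$, where $J_{0,r}$ is the reflection of $H_{0,r}$ through the incoming null direction in the normal bundle; this is precisely the normal-bundle connection coefficient in the mean-curvature gauge.

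First I would set up a convenient reference frame for the normal bundle. Regarding $X_r(\Sigma_r)$ as the graph of the height function $\tau = X^0_r$ over its spatial projection $\hat\Sigma_r = (X^1_r, X^2_r, X^3_r)(\Sigma_r) \subset \R^3$, the normal bundle is spanned by the future timelike vector $\hat e_0 \propto (1, \nabla^{\hat\Sigma}\tau)$ and the spacelike vector $\hat e_3 = (0, \hat\nu)$, with $\nabla^{\hat\Sigma}\tau$ the gradient of $\tau$ along $\hat\Sigma_r$ and $\hat\nu$ the outward unit normal of $\hat\Sigma_r$ in $\R^3$. For this reference frame the connection coefficient $\hat\alpha(\partial_B) = \langle\nabla_{\partial_B}\hat e_0, \hat e_3\rangle$ is, up to normalization, the contraction of the second fundamental form of $\hat\Sigma_r$ in $\R^3$ with $\nabla^{\hat\Sigma}\tau$, whose leading behavior is $\tfrac1r\mathring\nabla_B X^{0(0)}$. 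The mean-curvature frame is a hyperbolic rotation of the reference frame by a boost angle $\phi$ determined by $\sinh\phi = -\langle H_{0,r}, \hat e_0\rangle/|H_{0,r}|$, and a short computation with the flat connection yields the boost-covariance identity $\alpha_{H_0,r} = d\phi + \hat\alpha$. This reduces the problem to expanding the two quantities $\phi$ and $\hat\alpha$.

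Next I would expand $\hat\alpha(\partial_B)$ and $\phi$ order by order in $\tfrac1r$, feeding in the embedding ansatz \eqref{embeddingExpansion}, the inverse-metric expansion \eqref{invMetricExpansion} (which supplies the $C_{AB}$-corrections to $\Delta_{\sigma_r}$ and to $\nabla^{\hat\Sigma}$), and the solved linearized embedding \eqref{linearizedEmbedding}. At leading order $\sinh\phi \approx -\Delta X^0/|H_{0,r}| \sim -\tfrac{1}{2r}\mathring\Delta X^{0(0)}$, so that $d\phi + \hat\alpha$ collapses, after fixing the orientation conventions for $\{\hat e_0, \hat e_3\}$ and the sign convention for $\Delta$, to the stated $\alpha_{H_0}^{(-1)}(\partial_B) = \tfrac{1}{2}\mathring\nabla_B(\mathring\Delta + 2)X^{0(0)}$. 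The $\alpha_{H_0}^{(-2)}$ term then follows by retaining every subleading contribution: the $X^{0(-1)}$-piece of $\tau$, the subleading mean curvature components $\Delta X^{0(-1)}, \Delta X^{i(-1)}$, the $O(r^{-1})$ normalization correction to $\hat e_0$ coming from $|\nabla^{\hat\Sigma}\tau|^2$, and all quadratic cross terms coupling $C_{AB}$ (equivalently its potentials $F$, $\underline{F}$, $\underline{c}$) with $X^{0(0)}$ and its derivatives. Finally I would substitute \eqref{linearizedEmbedding} to rewrite the $X^{i(0)}$-dependence in terms of the potentials and collect terms to reach \eqref{alphaH0}.

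The main obstacle is the bookkeeping at order $\tfrac1{r^2}$: both $\phi$ and $\hat\alpha$ contribute nonlinear mixtures of $C_{AB}$ with $X^{0(0)}$, and these arise from several independent sources, namely the intrinsic curvature of the graph, the subleading correction to the normal frame, and the $C_{AB}$-twisted Laplacian, all of which must be assembled without error and then massaged into the compact form displayed in \eqref{alphaH0}. In particular the terms involving $\mathring\epsilon_{DE}$ and $\underline{c}$, which track the magnetic part of the shear, and the verification that the remaining pure-shear pieces organize into total derivatives $\mathring\nabla_B(\cdots)$, are where the bulk of the care is required.
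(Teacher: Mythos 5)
Your plan is correct in substance, and it is instructive to see how it relates to the paper's argument: the two are the same comparison packaged differently. The paper, following Chen--Wang--Yau, works with the unnormalized normal $w_r = (1+\sigma^{AB}\partial_{A}X^0_r\partial_{B}X^0_r,\ \sigma^{AB}\partial_{A}X^0_r\partial_{B}X^i_r)$, which is exactly proportional to your graph-gauge vector: using the embedding equation \eqref{IE} one checks $w_r = \sqrt{1+|\nabla\tau|^2_{\sigma}}\,\hat e_0$, so the reference direction is identical. The paper then projects out the $H_{0,r}$-component to obtain $v_r$ parallel to $J_{0,r}$ and uses $\sqrt{-\langle v_r,v_r\rangle}\,\alpha_{H_0,r}(\partial_B)=\langle\nabla^{\R}_{B}w_r,H_{0,r}/|H_{0,r}|\rangle-\nabla^{\R}_{B}\langle w_r,H_{0,r}/|H_{0,r}|\rangle$; this is precisely your decomposition $\alpha_{H_0,r}=\hat\alpha+d\phi$ in unnormalized form, with the first inner product carrying your $\hat\alpha$ and the exact term carrying your $d\phi$. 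The payoff of the paper's packaging is that one never has to compute the second fundamental form $\hat h_{AB}$ of the projection $\hat\Sigma_r$, its unit normal $\hat\nu$, or the boost angle $\phi$ itself: all normalization factors ($\sqrt{-\langle v_r,v_r\rangle}$, the $\hat e_0$ normalization, and $\sinh\phi$ versus $\phi$) are $1+O(r^{-2})$ and multiply quantities of size $O(r^{-1})$, hence only pollute $O(r^{-3})$. Your route buys geometric transparency --- the gauge is Wang--Yau's canonical gauge and the frame-change law is standard --- at the cost of expanding $\hat h_{AB}$ through subleading order. Two small corrections you should incorporate when executing the plan: the normalization correction to $\hat e_0$ coming from $|\nabla^{\hat\Sigma}\tau|^2$ is $O(r^{-2})$, not $O(r^{-1})$ (since $|\nabla\tau|^2_{\sigma_r}\sim r^{-2}$), so it in fact contributes nothing at the order of \eqref{alphaH0}; and the replacement of $d\phi$ by $d(\sinh\phi)$ must be justified explicitly --- it is harmless here because $\phi=O(r^{-1})$, so the discrepancy is $O(r^{-3})$. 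With those two points made, your assembly of the quadratic terms coupling $C_{AB}$ (through $F$, $\underline{F}$, $\underline{c}$) to $X^{0(0)}$ via \eqref{linearizedEmbedding} is the same final bookkeeping the paper performs.
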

\begin{proof}
First, we expand
\[ |H_{0,r}|^{-1} = \frac{r}{2} - \frac{1}{8}\mathring\nabla^{D}\mathring\nabla^{E}C_{DE} + O(r^{-1}),\]
and
\begin{align*}
\frac{H_{0,r}}{|H_{0,r}|} &= \Bigg(\frac{1}{2r}\mathring\Delta X^{0(0)} \\
&\quad + \frac{1}{r^2}\left(\frac{1}{2}\mathring\Delta X^{0(-1)} - \frac{1}{2}\mathring\nabla_{D}\left(C^{DE}\mathring\nabla_{E}X^{0(0)}\right) - \frac{1}{8}\mathring\nabla^{D}\mathring\nabla^{E}C_{DE}\mathring\Delta X^{0(0)}\right),\\
&\quad -\tilde{X}^{i}+\frac{1}{r}\left(\frac{1}{2}\mathring\Delta X^{i(0)} - \frac{1}{2}\mathring\nabla_{D}C^{DE}\mathring\nabla_{E}\tilde{X}^{i} + \frac{1}{4}\mathring\nabla^{D}\mathring\nabla^{E}C_{DE}\tilde{X}^{i}\right)\Bigg)\\
&\quad + \Bigg(O(r^{-3}), O(r^{-2})\Bigg).
\end{align*}

Following the ideas of Chen-Wang-Yau \cite{CWY}, we note that the vector
\begin{equation}
w_r := (1+\sigma^{AB}\partial_{A}X_{r}^0\partial_{B}X_{r}^0,\sigma^{AB}\partial_{A}X_{r}^0\partial_{B}X_{r}^{i}),
\end{equation}
is normal to the embedded surface, owing to the isometric embedding equation.  Shifting by an appropriate factor, we find that
\begin{equation}
v_r := w_r - \Bigg\langle w_r , \frac{H_{0,r}}{|H_{0,r}|}\Bigg\rangle \frac{H_{0,r}}{|H_{0,r}|}
\end{equation}
is parallel to $J_{0,r}$, with length
\[ \sqrt{-\langle v_r, v_r \rangle} = 1 + O(r^{-2}).\]
Hence we have
\begin{align*}
\sqrt{-\langle v_r, v_r \rangle}\alpha_{H_0,r}(\partial_{B}) &=\Bigg\langle\nabla^{\R}_{B} v_r, \frac{H_{0,r}}{|H_{0,r}|} \Bigg\rangle \\
&= \Bigg\langle \nabla^{\R}_{B}w_r, \frac{H_{0,r}}{|H_{0,r}|} \Bigg\rangle -\nabla^{\R}_{B}\Bigg\langle w_r, \frac{H_{0,r}}{|H_{0,r}|}\Bigg\rangle.
\end{align*}

Expanding
\begin{align*}
w_r &= (1 + O(r^{-2}),\ \frac{1}{r}\mathring\nabla^{A}\tilde{X}^{i}\mathring\nabla_{A}X^{0(0)}\\
&\quad + \frac{1}{r^2}\left(\mathring\nabla^{A}X^{0(0)}\mathring\nabla_{A}X^{i(0)} + \mathring\nabla^{A}X^{0(-1)}\mathring\nabla_{A}\tilde{X}^{i} - C^{AB}\mathring\nabla_{A}X^{0(0)}\mathring\nabla_{B}\tilde{X}^{i}\right) + O(r^{-3}) ),
\end{align*}
we have
\begin{align*}
&\Bigg\langle w_r, \frac{H_{0,r}}{|H_{0,r}|} \Bigg\rangle =\\
&-\frac{1}{2r}\mathring\Delta X^{0(0)} + \frac{1}{r^2}\Big(-\frac{1}{2}\mathring\Delta X^{0(-1)} + \frac{1}{2}\mathring\nabla_{A}\left(C^{AB}\mathring\nabla_{B}X^{0(0)}\right) + \frac{1}{8}\mathring\nabla^{D}\mathring\nabla^{E}C_{DE}\mathring\Delta X^{0(0)}\\
&-\mathring\nabla^{A}X^{0(0)}\mathring\nabla_{A}X^{i(0)}\tilde{X}^{i} +\frac{1}{2}\mathring\Delta X^{i(0)}\mathring\nabla^{A}\tilde{X}^{i}\mathring\nabla_{A}X^{0(0)} - \frac{1}{2}\mathring\nabla_{A}X^{0(0)}\mathring\nabla_{D}C^{DA}\Big),
\end{align*}
and
\begin{align*}
&\Bigg\langle \nabla^{\R}_{B}w_r, \frac{H_{0,r}}{|H_{0,r}|} \Bigg\rangle \\
&=  \frac{1}{r}\mathring\nabla_{B}X^{0(0)}\\
&\quad +\frac{1}{r^2} \mathring\nabla_{B}\left(\mathring\nabla^{A}\tilde{X}^{i}\mathring\nabla_{A}X^{0(0)}\right) \left(\frac{1}{2}\mathring\Delta X^{i(0)} - \frac{1}{2}\mathring\nabla_{D}C^{DE}\mathring\nabla_{E}\tilde{X}^{i} + \frac{1}{4}\mathring\nabla^{D}\mathring\nabla^{E}C_{DE}\tilde{X}^{i}\right)\\
&\quad + \frac{1}{r^2} \lt( -C_{B}^{D}\mathring\nabla_{D}X^{0(0)} + \mathring\nabla_{B}X^{0(-1)} - \mathring\nabla_{B}\left(\mathring\nabla_{D}X^{0(0)}\mathring\nabla^{D}X^{i(0)}\right)\tilde{X}^{i}\rt) + O(r^{-3}). \label{raw alpha_{H_0,r}}
\end{align*}

Substituting for $X^{i(0)}$ via \eqref{linearizedEmbedding}, we have
\[-\mathring\nabla^{A}X^{0(0)}\mathring\nabla_{A}X^{i(0)}\tilde{X}^{i} +\frac{1}{2}\mathring\Delta X^{i(0)}\mathring\nabla^{A}\tilde{X}^{i}\mathring\nabla_{A}X^{0(0)} = \frac{1}{2}\mathring\nabla_{A}X^{0(0)}\mathring\nabla_{D}C^{DA},\]
such that
\begin{align*}
&\Bigg\langle w_r, \frac{H_{0,r}}{|H_{0,r}|} \Bigg\rangle =\\
&-\frac{1}{2r}\mathring\Delta X^{0(0)} + \frac{1}{r^2}\Big(-\frac{1}{2}\mathring\Delta X^{0(-1)} + \frac{1}{2}\mathring\nabla_{A}\left(C^{AB}\mathring\nabla_{B}X^{0(0)}\right) + \frac{1}{8}\mathring\nabla^{D}\mathring\nabla^{E}C_{DE}\mathring\Delta X^{0(0)}\Big).
\end{align*}

Per \eqref{linearizedEmbedding}, we have moreover
\begin{align*}
&\frac{1}{2}\mathring\Delta X^{i(0)} - \frac{1}{2}\mathring\nabla_{D}C^{DE}\mathring\nabla_{E}\tilde{X}^{i} + \frac{1}{4}\mathring\nabla^{D}\mathring\nabla^{E}C_{DE}\tilde{X}^{i}\\
&= -\frac{1}{2}\mathring\nabla_{D}F^{DE}\mathring\nabla_{E}\tilde{X}^{i} - \frac{1}{2}\mathring\epsilon^{ED}\mathring\nabla_{D}\underline{c}\mathring\nabla_{E}\tilde{X}^{i},
\end{align*}
and
\begin{align*}
&- \mathring\nabla_{B}\left(\mathring\nabla_{D}X^{0(0)}\mathring\nabla^{D}X^{i(0)}\right)\tilde{X}^{i}\\
&= - \mathring\nabla_{B}\left(\mathring\nabla_{D}X^{0(0)}\mathring\nabla^{D}X^{i(0)}\tilde{X}^{i}\right)+\mathring\nabla_{D}X^{0(0)}\mathring\nabla^{D}X^{i(0)}\mathring\nabla_{B}\tilde{X}^{i}\\
&= \frac{1}{2}\mathring\nabla_{B}\left(\mathring\nabla^{D}X^{0(0)}\mathring\nabla^{A}F_{AD}\right) + \frac{1}{2}\mathring\nabla_{B}\left(\mathring\nabla^{D}X^{0(0)}\mathring{\epsilon}_{DE}\mathring\nabla^{E}\underline{c}\right) \\
&\quad +\frac{1}{2}C_{B}^{D}\mathring\nabla_{D}X^{0(0)} + \frac{1}{4} \na^D X^{0(0)} \lt( \mathring\epsilon_{BE} \na_D \na^E \cb - \mathring\epsilon_{DE} \na_B \na^E \cb \rt),
\end{align*}
such that
\begin{align*}
&\Bigg\langle \nabla^{\R}_{B}w_r, \frac{H_{0,r}}{|H_{0,r}|} \Bigg\rangle = \\
& \frac{1}{r}\mathring\nabla_{B}X^{0(0)} + \frac{1}{r^2}\Big(-\frac{1}{2}\mathring\nabla_{B}\mathring\nabla_{A}X^{0(0)}\left(\mathring\nabla_{D}F^{DA} + \mathring\epsilon^{AD}\mathring\nabla_{D}\underline{c}\right)\\
&-\frac{1}{2}C_{B}^{D}\mathring\nabla_{D}X^{0(0)} + \frac{1}{4} \na^D X^{0(0)} \lt( \mathring\epsilon_{BE} \na_D \na^E \cb - \mathring\epsilon_{DE} \na_B \na^E \cb \rt) \\
&+ \mathring\nabla_{B}X^{0(-1)} +\frac{1}{2}\mathring\nabla_{B}\left(\mathring\nabla^{D}X^{0(0)}\mathring\nabla^{A}F_{AD}\right)\\
&+ \frac{1}{2}\mathring\nabla_{B}\left(\mathring\nabla^{D}X^{0(0)}\mathring{\epsilon}_{DE}\mathring\nabla^{E}\underline{c}\right)\Big) + O(r^{-3}).
\end{align*}

Collecting terms, we have the connection one-form expansion \eqref{alphaH0}.
\end{proof}

\section{The Center-of-Mass Frame}

We assume that the isometric embeddings $X_{r} = (X^0_{r}, X^i_{r})$ expanding as \eqref{embeddingExpansion} satisfy the optimal isometric embedding equation \eqref{oiee} to second order with respect to the observers
\begin{equation}\label{T0expansion}
T_{0,r} =  (1,0,0,0) + \frac{1}{r}(0,b_1,b_2,b_3) + O(r^{-2}).
\end{equation}
That is, given 
\begin{align}\label{tauExpansion}
\begin{split}
\tau_{r} := -\langle T_{0,r}, X_{r} \rangle &= \tau^{(0)} + \frac{1}{r}\tau^{(-1)} + O(r^{-2}),\\
&= (X^{0(0)} - b_{i}\tilde{X}^{i}) + \frac{1}{r}(X^{0(-1)} - b_{i}X^{i(0)}) + O(r^{-2}),
\end{split}
\end{align}
together with the data (\ref{H},\ \ref{alphaH},\ \ref{H0},\ \ref{alphaH0}), we assume that $\tau_{r}$ satisfies the equation
\[div_{\sigma}j = div_{\sigma}\left(\rho_{r}\nabla \tau_{r} -\nabla\left[\sinh^{-1}\left(\frac{\rho_{r}\Delta\tau_{r}}{|H_{0,r}||H_{r}|}\right)\right] -\alpha_{H_0,r} + \alpha_{H,r}\right) = 0\]
up to second order.  As we shall see, the embedding and observer ansatze (\ref{embeddingExpansion},\ \ref{T0expansion}) are justified assuming two simple conditions on the mass aspect function $m$ in the metric expansion \eqref{CWYexpansion}.

Using the mean curvature formulae \eqref{H} and \eqref{H0} together with the fact that $\tau_{r}$ is $O(1)$, the density \eqref{densityDef} expands as
\begin{align}\label{densityFirst}
\begin{split}
\rho_{r} &= \frac{1}{r^2}\rho^{(-2)} + \frac{1}{r^3}\rho^{(-3)} + O(r^{-4}),\\
&= \frac{2m}{r^2} + \frac{1}{r^3}\left(|H_0|^{(-3)} - |H|^{(-3)}\right) + O(r^{-4}).
 \end{split}
\end{align}

Substituting the expansions (\ref{H},\ \ref{alphaH},\ \ref{H0},\ \ref{alphaH0},\ \ref{tauExpansion},\ \ref{densityFirst}), the current \eqref{currentDef} takes the form
\begin{align}\label{jDef}
\begin{split}
&j_{A,r} = \frac{1}{r}j_{A}^{(-1)} + \frac{1}{r^2}j_{A}^{(-2)} + O(r^{-3})\\
&= \frac{1}{r}\left(\alpha_{H}^{(-1)}(\partial_{A}) - \alpha_{H_0}^{(-1)}(\partial_{A})\right) + \frac{1}{r^2}\Big(2m\mathring\nabla_{A}\tau^{(0)} - \frac{1}{2}\mathring\nabla_{A}(m\mathring\Delta\tau^{(0)})\\
& - \alpha_{H_0}^{(-2)}(\partial_{A}) + \alpha_{H}^{(-2)}(\partial_{A})\Big) + O(r^{-3})\\
&= \frac{1}{r}\left(\mathring\nabla_{A}\left(m - \frac{1}{4}\left(\mathring\nabla^{D}\mathring\nabla^{E}C_{DE}\right) - \frac{1}{2}(\mathring\Delta +2)X^{0(0)}\right)- \frac{1}{2}\mathring\nabla^{B}C_{AB}\right)\\
&+\frac{1}{r^2}\Big(2m\mathring\nabla_{A}\tau^{(0)} - \frac{1}{2}\mathring\nabla_{A}(m\mathring\Delta\tau^{(0)}) - \alpha_{H_0}^{(-2)}(\partial_{A}) + \alpha_{H}^{(-2)}(\partial_{A})\Big)\\
& + O(r^{-3}).
\end{split}
\end{align}
Observation that the optimal isometric embedding equation implies that $j_A^{(-1)}$ is a co-closed one-form:
\begin{align}\label{jFirst}
\begin{split}
j_{A}^{(-1)} &= \mathring\nabla_{A}m - \frac{1}{4}\mathring\nabla_{A}\left(\mathring\nabla^{D}\mathring\nabla^{E}C_{DE}\right) - \frac{1}{2}\mathring\nabla^{D}C_{DA} - \frac{1}{2}\mathring\nabla_{A}(\mathring\Delta +2)X^{0(0)},\\
&= -\frac{1}{2}\mathring\nabla^{D}\underline{F}_{DA}\\&= -\frac{1}{4} \mathring\epsilon_{AB} \na^B (\mathring\Delta+2)\cb.
\end{split}
\end{align}
Finally, we note that the observers $T_{0,r}$ have the form $T_{0,r} = A_{r}((1,0,0,0))$, where $A_{r}$ are Lorentz transformations expanding as
\begin{equation}\label{boostExpansion}
A_{r} = Id + \frac{1}{r}A^{(-1)} + O(r^{-2}),
\end{equation}
with
\[A^{(-1)} =
\begin{bmatrix}
 0 & b_1 & b_2 & b_3\\
 b_1 & 0 & a_{12} & a_{13}\\
 b_2 & a_{21} & 0 & a_{23}\\
 b_3 & a_{31} & a_{32} & 0
 \end{bmatrix}, \quad a_{ij} = -a_{ji}.\]

According to \eqref{jDef}, we use only the connection one-form expansions \eqref{alphaH} and \eqref{alphaH0}, along with the divergence relation \eqref{divergenceRelation}, to calculate the linear term of the optimal isometric embedding equation:
\begin{align}\label{firstOrder}
\begin{split}
&\mathring\Delta(\mathring\Delta + 2)X^{0(0)} - \mathring\Delta (2m) + \frac{1}{2}(\mathring\Delta + 2)(\mathring\nabla^{D}\mathring\nabla^{E}C_{DE}) = 0,\\
&\mathring\Delta\left[(\mathring\Delta +2)X^{0(0)} - 2m + \frac{1}{4}(\mathring\Delta +2)(\mathring\Delta +2)c\right] = 0.
\end{split}
\end{align}

We solve \eqref{firstOrder} for $X^{0(0)}$, complementing our earlier calculation of the $X^{i(0)}$ \eqref{linearizedEmbedding} per the linearized isometric embedding equations.  Integrating against the eigenfunctions $\tilde{X}^{i}$, we deduce that the equation is solvable for $X^{0(0)}$ if and only if the mass aspect satisfies
\begin{equation}\label{zeroLinMom}
\int_{S^2} m(u, x^{A})\tilde{X}^{i} = 0.
\end{equation}
That is, solvability is guaranteed by vanishing of linear momentum \eqref{LinearMomentum} at future null infinity, placing the spacetime into what is often referred to as the center-of-mass frame.  The center-of-mass frame will turn out to be essential in the calculation of center-of-mass integral.  On the other hand, the calculation for angular momentum can be modified to accommodate other linear momenta profiles at future null infinity by means of different choices of observer $T_{0,r}$.

\begin{comment}
Owing to \eqref{zeroLinMom}, we can solve for $M(u,x^{A})$ satisfying
\begin{equation}\label{MDef}
(\mathring{\Delta} + 2) M = 2m,
\end{equation}
specified up to the addition of a linear combination of first eigenfunctions $f_{i}\tilde{X}^{i}$.

Substituting into \eqref{firstOrder}, we find
\begin{equation}\label{firstOrder2}
\mathring\Delta(\mathring\Delta +2)\left[X^{0(0)} - M + \frac{1}{4}(\mathring\Delta +2)c\right] = 0,
\end{equation}
such that
\begin{equation}\label{X00}
X^{0(0)} = M -\frac{1}{4}\left(\mathring\Delta  + 2\right)c + h,
\end{equation}
with $h$ an arbitrary constant.
\end{comment}  
We remark that it is possible to solve the isometric embedding equation \eqref{IE} at the next order, obtaining the embedding term $X^{i(-1)}$. 

Raising indices of the current \eqref{jDef} via \eqref{invMetricExpansion} and applying the divergence relation \eqref{divergenceRelation}, we calculate the next order term of the optimal isometric embedding equation \eqref{oiee}:
\begin{equation}\label{secondOrder}
-\frac{1}{2}\mathring\Delta\left(\mathring\Delta + 2\right)X^{0(-1)}-2b_{i}\mathring\nabla^{A}(m\mathring\nabla_{A}\tilde{X}^{i}) - b_{i}\mathring\Delta(m\tilde{X}^{i}) + S = 0,
\end{equation}
where we have used the connection one-form expansions \eqref{alphaH} and \eqref{alphaH0} in addition to that for $\tau$ \eqref{tauExpansion}.  Here $S$ is a shorthand for terms expressible in terms of the physical observables in the metric expansion \eqref{CWYexpansion} and $X^{0(0)}$.

We solve \eqref{secondOrder} for $X^{0(-1)}$, with integration against the eigenfunctions $\tilde{X}^{j}$ yielding necessary and sufficient conditions for solvability:
\begin{align*}
&\int_{S^2}\left[S\tilde{X}^{j} - 2b_{i}\mathring\nabla^{A}(m\mathring\nabla_{A}\tilde{X}^{i})\tilde{X}^{j} - b_{i}\mathring\Delta(m\tilde{X}^{i})\tilde{X}^{j}\right] = 0,\\
&\int_{S^2}\left[S\tilde{X}^{j} + 2b_{i}m\mathring\nabla_{A}\tilde{X}^{i}\mathring\nabla^{A}\tilde{X}^{j} + 2b_{i}m\tilde{X}^{i}\tilde{X}^{j}\right] = 0,\\
&\int_{S^2}S\tilde{X}^{j} +2b_{j}\int_{S^2}m = 0,
\end{align*}
where we have used the pointwise relation
\[ \mathring\nabla^{A}\tilde{X}^{i}\mathring\nabla_{A}\tilde{X}^{j} = \delta^{ij} - \tilde{X}^{i}\tilde{X}^{j}.\]

Assuming positivity of the Bondi mass \eqref{BondiMass}, such that $\int_{S^2} m > 0$, and any $S$, solvability follows from an appropriate choice of $b_{j}$, which we leave implicit.  In generating $X^{0(0)}$, $X^{0(-1)}$, and the $b_{j}$, the analysis of \eqref{firstOrder} and \eqref{secondOrder} allows us to solve for $\tau^{(0)}$ and $\tau^{(-1)}$ in \eqref{tauExpansion}.  In principle, we could also solve the isometric embedding equation \eqref{IE} to the next order, obtaining the embedding term $X^{i(-2)}$.

Determination of the $b_{i}$ is necessary in proper calculation of the terms appearing in the angular momentum calculation, though it turns out that such terms vanish after integration by parts.  On the other hand, the calculation for quasi-local center-of-mass integral relies only upon the condition \eqref{zeroLinMom} allowing solvability and application of the linearized equation \eqref{firstOrder}.

Solvability of higher-order terms appearing in the optimal isometric embedding equation \eqref{oiee} and the isometric embedding equation \eqref{IE} is accomplished in the work of Chen-Wang-Yau \cite[Theorem 3]{CWY_null}.  The authors show that, given the assumption $\int_{S^2} m > 0$, it is possible to solve \eqref{oiee} and \eqref{IE} inductively with respect to embeddings $X_{r}$ and observers $T_{0,r} = A_{r}((1,0,0,0))$ expanding according to \eqref{embeddingExpansion}, \eqref{T0expansion}, and \eqref{boostExpansion}, adding lower order terms to the Lorentz transformations $A_{r}$ as necessary.  

To summarize the section above, given our family of surfaces $\Sigma_{r}$ in a spacetime $(N, g)$ with mass aspect function $m$ satisfying
\begin{align*}
\int_{S^2} m &> 0,\\
\int_{S^2} m\tilde{X}^{i} &= 0,
\end{align*}
the pairs $(X_r, T_{0,r})$, with embeddings $X_{r}$ and observers $T_{0,r}$ expanding as \eqref{embeddingExpansion} and \eqref{T0expansion}, respectively, satisfy the optimal isometric embedding equation \eqref{oiee} and isometric embedding equation \eqref{IE} at all orders.

%%%%%%%%%%%%%%%%%%%%%%%%%%%%%%%%%%%%%%
\section{Limit of Quasi-local Angular Momentum}
We evaluate the quasi-local angular momentum formula \eqref{qlAM} with respect to the surfaces $(\Sigma_{r}, \sigma_{r})$ in a spacetime $(N,g)$ with vanishing linear momentum at null infinity \eqref{zeroLinMom} and with positive Bondi mass, such that the pairs $(X_{r}, T_{0,r})$ described above satisfy both the isometric embedding equation \eqref{IE} and optimal isometric embedding equation \eqref{oiee} to all orders.  In particular, we can make appropriate choices of $b_{i}$ in the observer expansion of $T_{0,r}$ \eqref{T0expansion} guaranteeing solvability of the optimal isometric embedding equation at first order \eqref{firstOrder} and second order \eqref{secondOrder}.  Taking limits as $r$ approaches infinity, we recover the components of the angular momentum at future null infinity.  In doing so, we make use of the data (\ref{H},\ \ref{alphaH},\ \ref{H0},\ \ref{alphaH0}), the derived expansions (\ref{tauExpansion},\ \ref{densityFirst},\ \ref{jDef},\ \ref{boostExpansion}), the linearized optimal isometric embedding equation \eqref{firstOrder}. 

We begin by considering the case where the observer $T_0 = (1,0,0,0)$; in particular, $b_i = 0$ in its expansion \eqref{T0expansion}.  According to Definition \ref{qlCharge}, we consider the angular momentum vector fields
\begin{equation}
K_{i,j,r} = x^{i}\partial_{j} - x^{j}\partial_{i},
\end{equation}
associated with $T_0 = (1,0,0,0).$  Here $K_{i,j,r}$ denotes the restriction of the Lorentz boost to the embedded surface $X_{r}(\Sigma_{r})$.

Evaluating on $X_{r}(\Sigma_{r})$, we have
\begin{equation}\label{innerProdVanishing}
\langle K_{i,j,r}, T_0 \rangle = 0.
\end{equation}

Considering the projection on the embedded surface $X_{r}(\Sigma_{r})$, we calculate
\begin{align*}
&(K^{T}_{i,j,r})^{B} = \left(\frac{1}{r^2}\mathring\sigma^{AB} - \frac{1}{r^3}C^{AB} + O(r^{-4})\right)\cdot\\
&\Bigg\langle \left(\Big(\mathring\nabla_{A}X^{0(0)} + \frac{1}{r}\mathring\nabla_{A}X^{0(-1)} + O(r^{-2})\Big)\partial_{t} + \left(r\mathring\nabla_{A}\tilde{X}^{k} + \mathring\nabla_{A}X^{k(0)} + O(r^{-1})\right)\partial_{k}\right),\\
&\left(r\tilde{X}^{i}\partial_{j}+X^{i(0)}\partial_{j} - r\tilde{X}^{j}\partial_{i}-X^{j(0)}\partial_{i} + O(r^{-1})\right)\Bigg\rangle\\
&=  \tilde{X}^{i}\mathring\nabla^{B}\tilde{X}^{j} - \tilde{X}^{j}\mathring\nabla^{B}\tilde{X}^{i} - \frac{1}{r}C^{B}_{A}(\tilde{X}^{i}\mathring\nabla^{A}\tilde{X}^{j} - \tilde{X}^{j}\mathring\nabla^{A}\tilde{X}^{i} )\\
&+ \frac{1}{r}\mathring\nabla^{B}\left(\tilde{X}^{j}X^{i(0)}-\tilde{X}^{i}X^{j(0)}\right)+\frac{2}{r}\left(\tilde{X}^{i}\mathring\nabla^{B}X^{j(0)} - \tilde{X}^{j}\mathring\nabla^{B}X^{i(0)}\right) + O(r^{-2}),
 \end{align*}
 which simplifies to
 \begin{align}\label{projectionExpansion2}
 \begin{split}
 (K^{T}_{i,j,r})^{B} &= Y_{i,j}^{B} - \frac{1}{r}\underline{F}^{B}_{D}Y_{i,j}^{D} + \frac{1}{r}\mathring\epsilon_{AD}\mathring\nabla^{B}\mathring\nabla^{D}\underline{c}Y_{i,j}^{A}\\
 &+  \frac{1}{r}\mathring\nabla^{B}\left(\tilde{X}^{j}X^{i(0)}-\tilde{X}^{i}X^{j(0)}\right) + O(r^{-2}),
\end{split}
\end{align}
where
\begin{equation}\label{YDef}
Y_{i,j}^{A} := \tilde{X}^{i}\mathring\nabla^{A}\tilde{X}^{j} - \tilde{X}^{j}\mathring\nabla^{A}\tilde{X}^{i} = \epsilon_{ijq}\mathring\epsilon^{AB}\mathring\nabla_{B}\tilde{X}^{q},
\end{equation}
and we have substituted for the $X^{i(0)}$ in the last term via \eqref{linearizedEmbedding}.

Note that $Y_{i,j}^{A}$ satisfies the Killing equation
\begin{equation}\label{YKilling}
\mathring\nabla^{A}Y_{i,j}^{B} = -\mathring\nabla^{B}Y_{i,j}^{A},
\end{equation}
which implies the further identities
\begin{align}
\mathring\nabla_{A}Y_{i,j}^{A} &= 0, \label{angMomOne}\\
\mathring\Delta Y_{i,j}^{A} &= -Y_{i,j}^{A}. \label{angMomTwo}
\end{align}

With the above calculations, the quasi-local angular momentum \eqref{qlAM} expands as
\begin{align}
\begin{split}
J^{k}(\Sigma_{r},X_{r},T_0) = \frac{r\epsilon^{ijk}}{16\pi}&\int_{S^2} Y_{i,j}^{A}\mathring\nabla^{D}\underline{F}_{DA} + O(1),
\end{split}
\end{align}
with worrisome, possibly divergent behavior arising from the top-order term of $j_{A}$ \eqref{jFirst}.  This top-order term vanishes via integration by parts and application of the Killing equation \eqref{YKilling}.

Expanding \eqref{qlAM} to the next order, we have vanishing of integrals of exact terms in the current $j_{A}^{(-2)}$ \eqref{jDef} contracted with $Y^{A}_{i,j}$, owing to integration by parts and application of \eqref{angMomOne}. We observe 
\begin{lemma}
\begin{align*}
\int_{S^2} \lt( K^T_{i,j,r}\rt)^{(-1)A} j_A^{(-1)} =0.
\end{align*}
\end{lemma}
\begin{proof}
Recall \eqref{jFirst}, $j_A^{(-1)} = -\frac{1}{2} \na^B \Fb_{AB},$ and we have vanishing of the term
\[ \int_{S^2} \mathring\nabla^{B}\left(\tilde{X}^{j}X^{i(0)}-\tilde{X}^{i}X^{j(0)}\right)j_{B}^{(-1)} = -\frac{1}{2}\int_{S^2}\mathring\nabla^{B}\left(\tilde{X}^{j}X^{i(0)}-\tilde{X}^{i}X^{j(0)}\right)\mathring\nabla^{D}\underline{F}_{DB},\]
owing to integration by parts and
\[ \mathring\nabla^{B}\mathring\nabla^{D}\underline{F}_{BD} = 0.\]
Hence,
\[\int_{S^2} \lt( K^T_{i,j,r}\rt)^{(-1)A} j_A^{(-1)} = \frac{1}{2} \int_{S^2} \lt( Y^D_{i,j} \Fb^A_D - \mathring\epsilon_{ED}\na^A\na^D \cb Y^E_{i,j} \rt) \na^B \Fb_{AB}. \]
To show the right-hand side vanishes, we compute
 \begin{align*}
\int_{S^2} \mathring\epsilon^{AE} \na_E \tilde X^k \Fb_A^B \na^D \Fb_{DB} &= \int_{S^2} \frac{1}{2} \mathring\epsilon^{AE} \na_E \tilde X^k \Fb_A^B \mathring\epsilon_{BD} \na^D (\mathring\Delta + 2)\cb
\\ &= \int_{S^2} \frac{1}{2} \mathring\epsilon^{AE} \na_E \tilde X^k \mathring\epsilon_{BD} \na^B \Fb^D_A (\mathring\Delta + 2) \cb\\
&= \int_{S^2} - \frac{1}{4} \mathring\epsilon^{AE} \na_E \tilde X^k \na_A (\mathring\Delta + 2) \cb\ (\mathring\Delta + 2) \cb\\
&=0,
\end{align*}
where we have applied \eqref{contract_codazzi}, and
\begin{align*}
\int_{S^2} \na_D \tilde X^k \na^B \na^D \cb \na^E \Fb_{EB} &= \int_{S^2} \frac{1}{2} \na_D \tilde X^k \mathring\epsilon_{BE} \na^B\na^D \cb  \na^E (\mathring\Delta + 2) \cb\\
&= \int_{S^2} \frac{1}{2} \tilde X^k \mathring\epsilon_{BE} \na^B \cb  \na^E (\mathring\Delta + 2) \cb\\
&= \int_{S^2} -\frac{1}{2} \na^E \tilde X^k \mathring\epsilon_{BE} \na^B \cb  \mathring\Delta \cb\\
&= \int_{S^2} \frac{1}{4} \na^E \tilde X^k \mathring\epsilon_{BE} \na^B(\na_{A}\cb\na^{A}\cb)\\
&=0.
\end{align*}

Per \eqref{YDef}, $Y_{i,j}^A = \varepsilon_{ijq} \mathring\epsilon_{AB} \na^B \tilde X^q$, and the relations above imply
\begin{align}\label{vanish_YFbFb}
\int_{S^2} Y_{i,j}^A \Fb_A^B \na^D \Fb_{DB} =0,
\end{align}
and
\begin{align*}
\int_{S^2} Y_{i,j}^A \mathring\epsilon_{AD} \na^B \na^D \cb \na^E \Fb_{EB}=0.
\end{align*}
This completes the proof.
\end{proof}
\begin{remark} For future reference, we note the vanishing
\begin{align}\label{vanish_YFF}
\begin{split}
&\int_{S^2} Y_{i,j}^{A}F_{A}^{B}\mathring\nabla^{D}F_{DB} = \int_{S^2}\frac{1}{2}Y_{i,j}^{A}F_{A}^{B}\mathring\nabla_{B}\left(\mathring\Delta + 2\right)c\\
=& \int_{S^2} -\frac{1}{2}Y_{i,j}^{A}\mathring\nabla_{B}F_{A}^{B}\left(\mathring\Delta + 2\right)c = \int_{S^2}-\frac{1}{8}Y_{i,j}^{A}\mathring\nabla_{A}\left(\left(\mathring\Delta +2\right)c\right)^2 = 0,
\end{split}
\end{align}
using the properties of $Y^{A}_{i,j}$ \eqref{YKilling} and \eqref{angMomOne}.
\end{remark}

Applying these reductions and taking the limit as $r$ tends to infinity, we calculate the components of the angular momentum at future null infinity:\begin{align}\label{angmomFirst}
\begin{split}
J^k = -\frac{\epsilon^{ijk}}{8\pi}&\int_{S^2}\Bigg[- Y_{i,j}^{A}N_{A} + 2mY_{i,j}^{A}\mathring\nabla_{A}X^{0(0)}+\frac{1}{4}Y_{i,j}^{A}C_{A}^{B}\mathring\nabla^{D}C_{DB}\\
&\quad + \frac{1}{2}Y_{i,j}^{A}C_{A}^{D}\mathring\nabla_{D}X^{0(0)} +\frac{1}{2}Y_{i,j}^{A}\left(\mathring\nabla_{A}\mathring\nabla_{E}X^{0(0)}\right)\left(\mathring\nabla_{D}F^{DE}\right)\\
&\quad+\frac{1}{2}Y_{i,j}^{A}\mathring\nabla_{A}\mathring\nabla_{D}X^{0(0)}\mathring\epsilon^{DE}\mathring\nabla_{E}\underline{c} \\
&\quad-\frac{1}{4} Y^A_{i,j} \na^D X^{0(0)} \lt( \mathring\epsilon_{AE} \na_D \na^E \cb - \mathring\epsilon_{DE} \na_A \na^E \cb \rt) \Bigg].
\end{split}
\end{align}

We simplify the terms involving $X^{0(0)}$ in the following lemma:
\begin{lemma}\label{ang_X0}
\begin{align*}
&\int_{S^2} \Bigg[ - 2mY_{i,j}^{A}\mathring\nabla_{A}X^{0(0)} - \frac{1}{2}Y_{i,j}^{A}C_{A}^{D}\mathring\nabla_{D}X^{0(0)} \\ &\qquad -\frac{1}{2}Y_{i,j}^{A}\left(\mathring\nabla_{A}\mathring\nabla_{E}X^{0(0)}\right)\left(\mathring\nabla_{D}F^{DE}\right)
-\frac{1}{2}Y_{i,j}^{A}\mathring\nabla_{A}\mathring\nabla_{D}X^{0(0)}\mathring\epsilon^{DE}\mathring\nabla_{E}\underline{c} \\
&\qquad + \frac{1}{4}Y^A_{i,j} \na^D X^{0(0)} \lt( \mathring\epsilon_{AE} \na_D \na^E \cb - \mathring\epsilon_{DE} \na_A \na^E \cb \rt)\Bigg]\\
&= \int_{S^2} - cY_{i,j}^{A}\mathring\nabla_{A}m  
\end{align*}
\end{lemma}
\begin{proof}
We will use the following identities obtained by integration by parts and (\ref{YKilling}, \ref{angMomOne}, \ref{angMomTwo}): For any functions $f$ on $S^2,$
\begin{align}\label{by_parts_1}
\int_{S^2} Y^A_{i,j} \na_A f (\mathring\Delta+2) f =0;
\end{align}
for any functions $f$ and $g$ on $S^2,$ 
\begin{align}\label{by_parts_2}
\int_{S^2} Y^A_{i,j}  (\mathring\Delta+2) f \na_A g= \int_{S^2} Y^A_{i,j} f \na_A (\mathring\Delta+2)g.
\end{align}

With repeated integration by parts and application of the properties of $Y^{A}_{i,j}$ (\ref{YKilling},\ \ref{angMomOne},\ \ref{angMomTwo}), we calculate
\begin{align*}
&\int_{S^2}-\frac{1}{2}Y_{i,j}^{A}\left(\mathring\nabla_{A}\mathring\nabla_{E}X^{0(0)}\right)\left(\mathring\nabla_{D}F^{DE}\right)\\
=&\int_{S^2}\left[\frac{1}{4}\left(\mathring\nabla^{E}Y_{i,j}^{A}\right)\mathring\nabla_{A}X^{0(0)}\mathring\nabla_{E}\left(\mathring\Delta + 2\right)c + \frac{1}{2}Y_{i,j}^{A}\mathring\nabla_{A}X^{0(0)}\left(\mathring\nabla^{D}\mathring\nabla^{E}F_{DE}\right)\right]\\
=&\int_{S^2}\left[-\frac{1}{4}\mathring\Delta Y_{i,j}^{A}\mathring\nabla_{A}X^{0(0)}\left(\mathring\Delta + 2\right)c + \frac{1}{2}Y_{i,j}^{A}\mathring\nabla_{A}X^{0(0)}\left(\mathring\nabla^{D}\mathring\nabla^{E}C_{DE}\right)\right]\\
=&\int_{S^2}\left[-\frac{1}{2}Y_{i,j}^{A}X^{0(0)}\mathring\nabla^{D}F_{DA} + \frac{1}{2}Y_{i,j}^{A}\mathring\nabla_{A}X^{0(0)}\left(\mathring\nabla^{D}\mathring\nabla^{E}C_{DE}\right)\right]\\
=&\int_{S^2}\left[\frac{1}{2}Y_{i,j}^{A}\mathring\nabla_{D}X^{0(0)}F^{D}_{A} + \frac{1}{2}Y_{i,j}^{A}\mathring\nabla_{A}X^{0(0)}\left(\mathring\nabla^{D}\mathring\nabla^{E}C_{DE}\right)\right]\\
\end{align*}
and
\begin{align*}
&\frac{1}{4} \int_{S^2} Y^A_{i,j} \na^D X^{0(0)} \lt( \mathring\epsilon_{AE} \na_D \na^E \cb - \mathring\epsilon_{DE} \na_A \na^E \cb \rt)\\
=&\frac{1}{4} \int_{S^2} -Y^A_{i,j} \mathring\Delta X^{0(0)} \mathring\epsilon_{AE} \na^E \cb + \na^A Y^D_{i,j} \na_D X^{0(0)} \mathring\epsilon_{AE}\na^E \cb + Y^A_{i,j} \na^D \na_A X^{0(0)} \mathring\epsilon_{DE} \na^E \cb\\
=& -\frac{1}{4} \int_{S^2} Y^A_{i,j} \mathring\Delta X^{0(0)} \mathring\epsilon_{AE} \na^E \cb.  
\end{align*}
Hence, the original integral reduces to
\begin{align*}
\int_{S^2} \Bigg[&-2mY_{i,j}^{A}\mathring\nabla_{A}X^{0(0)} + \frac{1}{2}Y_{i,j}^{A}\mathring\nabla_{A}X^{0(0)}\left(\mathring\nabla^{D}\mathring\nabla^{E}C_{DE}\right) \\
& -\frac{1}{2} Y^A_{i,j} \underline{F}_A^D
 \mathring{\nabla}_D X^{0(0)} -\frac{1}{2}Y_{i,j}^{A}\mathring\nabla_{A}\mathring\nabla_{D}X^{0(0)}\mathring\epsilon^{DE}\mathring\nabla_{E}\underline{c} -\frac{1}{4} Y^A_{i,j} \mathring\Delta X^{0(0)} \mathring\epsilon_{AE} \na^E \cb \Bigg]. 
\end{align*}
Substituting the linearized equation \eqref{firstOrder} into the first line, we calculate
\begin{align*}
&\int_{S^2}\left[-2mY_{i,j}^{A}\mathring\nabla_{A}X^{0(0)} + \frac{1}{2}Y_{i,j}^{A}\mathring\nabla_{A}X^{0(0)}\left(\mathring\nabla^{D}\mathring\nabla^{E}C_{DE}\right)\right]\\
&= \int_{S^2} \left[-(\mathring\Delta +2)X^{0(0)}Y_{i,j}^{A}\mathring\nabla_{A}X^{0(0)} - \frac{1}{2}(\mathring\Delta +2)cY_{i,j}^{A}\mathring\nabla_{A}X^{0(0)}\right]\\
&= \int_{S^2} - c Y^A_{i,j} \na_A m,
\end{align*}
where we used \eqref{by_parts_1}, \eqref{by_parts_2} and linearized equation \eqref{firstOrder} again in the last equality.

For the second line, using the properties \eqref{YKilling} and \eqref{angMomTwo}, we note
\begin{align*}
&\int_{S^2} Y_{i,j}^{A}\underline{F}_{A}^{D}\mathring\nabla_{D}X^{0(0)} \\
&=\int_{S^2} -\frac{1}{2}Y_{i,j}^{A}X^{0(0)}\mathring\epsilon_{AD}\mathring\nabla^{D}\left(\mathring\Delta + 2\right)\underline{c}\\
&=\int_{S^2}\left[-Y_{i,j}^{A}X^{0(0)}\mathring\epsilon_{AD}\mathring\nabla^{D}\underline{c} - \frac{1}{2}Y_{i,j}^{A}X^{0(0)}\mathring\epsilon_{AD}\mathring\nabla^{D}\mathring\Delta\underline{c}\right]\\
&=\int_{S^2}\left[-\frac{1}{2}Y_{i,j}^{A}X^{0(0)}\mathring\epsilon_{AD}\mathring\nabla^{D}\underline{c} - \frac{1}{2}\mathring\Delta\left(Y_{i,j}^{A}X^{0(0)}\right)\mathring\epsilon_{AD}\mathring\nabla^{D}\underline{c}\right]\\
&=\int_{S^2}\left[-\frac{1}{2}Y_{i,j}^{A}\mathring\Delta X^{0(0)}\mathring\epsilon_{AD}\mathring\nabla^{D}\underline{c} - \mathring\nabla^{B}Y_{i,j}^{A}\mathring\nabla_{B}X^{0(0)}\mathring\epsilon_{AD}\mathring\nabla^{D}\underline{c}\right]\\
&=\int_{S^2}\left[-\frac{1}{2}Y_{i,j}^{A}\mathring\Delta X^{0(0)}\mathring\epsilon_{AD}\mathring\nabla^{D}\underline{c} - Y_{i,j}^{B}\mathring\nabla^{A}\mathring\nabla_{B}X^{0(0)}\mathring\epsilon_{AD}\mathring\nabla^{D}\underline{c}\right].
\end{align*}
Hence, the second line vanishes. Putting the two reductions together, we obtain Lemma \ref{ang_X0}.
\end{proof}

Applying these reductions, we have a simplified formula for the angular momentum at future null infinity:
\begin{theorem}\label{main_angmom}
Suppose $(N,g)$ is a spacetime with Bondi-Sachs expansion \eqref{CWYexpansion}.  Further assuming that $(N,g)$ has vanishing linear momentum at null infinity \eqref{zeroLinMom} and positive Bondi mass, the angular momentum at null infinity has the components
\begin{align}\label{angmomFinal}
\begin{split}
J^{k} = \frac{\epsilon^{ijk}}{8\pi}\int_{S^2}& Y_{i,j}^{A} \lt( N_{A} -\frac{1}{4}C_{AB}\mathring\nabla_{D}C^{DB} - c\mathring\nabla_{A}m   \rt) .
\end{split}
\end{align} 
In particular, if $C_{AB}$ is closed, then
\[J^k =\frac{\epsilon^{ijk}}{8\pi} \int_{S^2} Y_{i,j}^{A} \left( N_{A} - c \mathring\nabla_{A}m \right),\]
thanks to \eqref{vanish_YFF}.
In addition, if $C_{AB}$ is closed and $m$ is angular independent, then
\[J^{k} = \frac{\epsilon^{ijk}}{8\pi} \int_{S^2} Y_{i,j}^{A}N_{A}.\]
\end{theorem} 

\begin{proof}
We have calculated \eqref{angmomFinal} with respect to an observer $T_0 = (1,0,0,0)$, assuming that \eqref{firstOrder} and \eqref{secondOrder} are solvable.  More generally, we have solvability of the two for appropriate choice of $b_{i}$ in the observer expansion \eqref{T0expansion}, assuming vanishing of linear momentum at null infinity \eqref{zeroLinMom} and positivity of the Bondi mass.  Following Definition \ref{qlCharge}, we write $T_{0,r} = A_{r}((1,0,0,0))$, with $SO(3,1)$ transformations $A_{r}$, and measure the quasi-local angular momentum with respect to the Lorentz rotation $A_{r}(K_{i,j,r})$.

Owing to preservation under $SO(3,1)$ transformations, vanishing of the inner product \eqref{innerProdVanishing} still holds. Let $\varepsilon^{(ij)}$ denote the $3 \times 3$ skew-symmetric matrix with $ij$ entry 1, $ji$ entry $-1$ and other entries $0$. Using the expansion of the Lorentz transformations $A_{r}$ \eqref{boostExpansion}, we have
\[ A_r(K_{i,j,r}) = x^i\partial_{j} - x^{j}\partial_{i} + \frac{1}{r}\left((x^ib^j - x^jb^i)\partial_{t} + \sum_{k,l=1}^3 a_{kl} \varepsilon^{(ij)}_{lp} x^p \pl_k \right)+ O(r^{-2}),\]
from which application of the projection formula \eqref{projectionFormula} gives the projection expansion 
 \begin{align*}
&\lt( A_r (K^{T}_{i,j,r}) \rt)^{B} = Y_{i,j}^{B} - \frac{1}{r}\underline{F}^{B}_{D}Y_{i,j}^{D} + \frac{1}{r}\mathring\epsilon_{AD}\mathring\nabla^{B}\mathring\nabla^{D}\underline{c}Y_{i,j}^{A}\\
 &+  \frac{1}{r}\mathring\nabla^{B}(\tilde{X}^{j}X^{i(0)}-\tilde{X}^{i}X^{j(0)}) + \frac{1}{r}\left(\sum_{k,l=1}^3 a_{kl} \varepsilon^{(ij)}_{lp} \tilde X^p \na^B \tilde X^k\right) + O(r^{-2}),
\end{align*}
differing from the earlier expansion \eqref{projectionExpansion2} by an exact term.  The contribution to the quasi-local formula amounts to
\[ \frac{1}{16\pi} \sum_{k,l} a_{kl} \varepsilon^{(ij)}_{pl} \int_{S^2} \tilde X^p \na^B \tilde X^k j_{B}^{(-1)} = 0,\]
by \eqref{jFirst} and Lemma \ref{xixj-co-closed}.

There are also changes in $j_{A}^{(-2)}$, via the definition of $j_{A}$ \eqref{jDef} and the expansion of $\tau$ \eqref{tauExpansion}.  The modifications to the quasi-local formula vanish, as
\begin{align*}
&\frac{1}{8\pi}\int_{S^2}\left[2mY_{i,j}^{A}\mathring\nabla_{A}b_{n}\tilde{X}^{n} + Y_{i,j}^{A}\mathring\nabla_{A}\left(m b_{n}\tilde{X}^{n}\right)\right]\\
&=\frac{b_{n}}{8\pi}\int_{S^2} 2m \epsilon_{ijq}\lt( \mathring\epsilon^{AB}\mathring\nabla_{B}\tilde{X}^{q}\mathring\nabla_{A}\tilde{X}^{n} \rt) \\
&=\frac{b_n}{8\pi} \int_{S^2} 2m \epsilon_{ijq} \epsilon^{nq}_{\;\;\;\;l} \tilde X^l =0,
\end{align*}
where we have used the definition of $Y^{A}_{i,j}$ \eqref{YDef}, its divergence-free property \eqref{angMomOne}, and the assumption of vanishing linear momentum \eqref{zeroLinMom}.

With the vanishing of these terms and taking limits, we see that the earlier formula \eqref{angmomFinal} is preserved.
\end{proof}

%%%%%%%%%%%%%%%%%%%%%%%%%%%%%%%%%
\section{Limit of Quasi-local Center-of-Mass Integral}

We evaluate the quasi-local center-of-mass formula \eqref{qlCOM} with respect to the surfaces $(\Sigma_{r}, \sigma_{r})$ in a spacetime $(N,g)$ with vanishing linear momentum at null infinity \eqref{zeroLinMom} and with positive Bondi mass, such that the pairs $(X_{r}, T_{0,r})$ described above satisfy both the isometric embedding equation \eqref{IE} and optimal isometric embedding equation \eqref{oiee} to all orders.  Taking limits as $r$ approaches infinity, we recover the components of the center-of-mass integral at future null infinity.  In doing so, we make use of the data (\ref{H},\ \ref{alphaH},\ \ref{H0},\ \ref{alphaH0}), the derived expansions (\ref{tauExpansion},\ \ref{densityFirst},\ \ref{jDef},\ \ref{boostExpansion}), the linearized optimal isometric embedding equation \eqref{firstOrder}.

For simplicity, we begin by considering the case where the observers $T_{0,r} = (1,0,0,0)$, such that $b_i = 0$ in its expansion \eqref{T0expansion}.  According to Definition \ref{qlCharge}, we consider the boosts
\begin{equation}
K_{i,r} := x^{i}\partial_{t} + t\partial_{i} 
\end{equation}
associated with $T_0 = (1,0,0,0).$  Here $K_{i,r}$ denotes the restriction of the Lorentz boost to the embedded surface $X_{r}(\Sigma_{r})$.

On the embedded surfaces $X_{r}(\Sigma_{r})$, the $K_{i,r}$ satisfy
\begin{equation}\label{innerProdExpansion}
\langle K_{i,r}, T_0 \rangle = -r\tilde{X}^{i} - X^{i(0)} + O(r^{-1}),
\end{equation}
\begin{align}\label{projectionExpansion}
\begin{split}
(K^{T}_{i,r})^{B} &= \left(\frac{1}{r^2}\mathring\sigma^{AB} - \frac{1}{r^3}C^{AB} + O(r^{-4})\right)\\
&\Bigg\langle \left(r\tilde{X}^{i} +X^{i(0)}\right)\partial_{t} + X^{0(0)}\partial_{i} + O(r^{-1}),\\
&\mathring\nabla_{A}X^{0(0)}\partial_{t} + \left(r\mathring\nabla_{A}\tilde{X}^{j} + \mathring\nabla_{A}X^{j(0)}\right)\partial_{j} + O(r^{-1})\Bigg\rangle\\
&=\frac{1}{r}\left(-\mathring\nabla^{B}\left(\tilde{X}^{i}X^{0(0)}\right) + 2\mathring\nabla^{B}\tilde{X}^{i}X^{0(0)}\right) + O(r^{-2}),
 \end{split}
 \end{align}
 where in the second expression we apply the projection formula \eqref{projectionFormula}.

Expanding the center-of-mass formula \eqref{qlCOM}, we find
 \begin{equation}
 C^i(\Sigma_{r},X_{r},T_0)  = \frac{r}{4\pi e}\int_{S^2}m\tilde{X}^{i} + O(1),
 \end{equation}
 with the seemingly divergent top-order term annihilated by our assumption of vanishing linear momentum at null infinity \eqref{zeroLinMom}.  

Expanding \eqref{qlCOM} to the next order and taking the limit as $r$ approaches infinity, the center-of-mass integral at future null infinity is given by components
\begin{align}\label{comFirst}
\begin{split}
C^i = \frac{1}{8\pi e} \int_{S^2} &\Big[\tilde X^i \lt( |H_0|^{(-3)} - |H|^{(-3)} \rt)\\
 &+ 2mX^{i(0)}- 2 \na^A \tilde X^i X^{0(0)} j_A^{(-1)}\Big],
 \end{split}
\end{align}
where we have applied the linearized optimal isometric embedding equation \eqref{firstOrder}, amounting to a divergence-free condition on $j_{A}^{(-1)}$, to integrate away its contraction with the first term in the expansion \eqref{projectionExpansion}.

As mentioned in the previous section, owing to the form of the components of \eqref{comFirst}, calculation of the center-of-mass does not rely upon application of the second order term \eqref{secondOrder} in the optimal isometric embedding equation.  

We begin our simplification of \eqref{comFirst} by rewriting terms in the integral of the reference mean curvature norm \eqref{H0}:
\begin{lemma}\label{X(-1)}
\begin{align*}
&\int_{S^2} \tilde X^i \lt( - \tilde X^j \mathring{\Delta} X^{j(-1)} - \frac{1}{4} \lt( \mathring{\Delta} X^{0(0)}\rt)^2 \rt) \\
&= \int_{S^2} \tilde X^i \Bigg( \frac{1}{4}(C_{DE}C^{DE}) + \frac{1}{4}(\Fb_{DE}\Fb^{DE}) -\frac{1}{4}\mathring\nabla_{A}\mathring\nabla_{B}\cb\mathring\nabla^{A}\mathring\nabla^{B}\cb - \frac{1}{2} \na^B F_{BD} \mathring\epsilon^{DA} \na_A \cb\Bigg)\\
&\quad +\int_{S^2}\tilde{X}^{i}\Bigg(- \frac{1}{4} \mathring\nabla_{A}\cb\mathring\nabla^{A}\cb -\frac{1}{16} \lt( \na^A \na^B C_{AB}\rt)^2 -m^2 + \frac{1}{2} m  (\mathring{\Delta} + 2) c + \frac{1}{2} m \na^A \na^B C_{AB} \Bigg).
\end{align*}
\end{lemma}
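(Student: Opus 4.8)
The plan is to express everything in terms of data already computed in the previous sections; the sole genuinely new ingredient is the second-order embedding coefficient $X^{i(-1)}$, which enters only through the integral $\int_{S^2}\tilde X^i\tilde X^j\mathring\Delta X^{j(-1)}$.

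First I would push the isometric embedding equation \eqref{IE} one order past the linearized equation \eqref{linIE}. Collecting the $O(r^0)$ terms with the help of the metric expansion \eqref{metricExpansion} and the embedding ansatz \eqref{embeddingExpansion} yields the second-order isometric embedding equation
\begin{equation*}
\mathring\nabla_A\tilde X^i\mathring\nabla_B X^{i(-1)} + \mathring\nabla_A X^{i(-1)}\mathring\nabla_B\tilde X^i = Q_{AB},
\end{equation*}
with source
\begin{equation*}
Q_{AB} := \tfrac14\big(C_{DE}C^{DE}\big)\mathring\sigma_{AB} + \mathring\nabla_A X^{0(0)}\mathring\nabla_B X^{0(0)} - \mathring\nabla_A X^{i(0)}\mathring\nabla_B X^{i(0)}.
\end{equation*}
Tracing against $\mathring\sigma^{AB}$ (and using $\mathring\sigma^{AB}\mathring\sigma_{AB}=2$) isolates the single scalar relation I will actually need,
\begin{equation*}
\mathring\nabla_A\tilde X^j\mathring\nabla^A X^{j(-1)} = \tfrac12\operatorname{tr}Q = \tfrac14 C_{DE}C^{DE} + \tfrac12|\mathring\nabla X^{0(0)}|^2 - \tfrac12|\mathring\nabla X^{i(0)}|^2.
\end{equation*}

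The crux is to show that $\int_{S^2}\tilde X^i\tilde X^j\mathring\Delta X^{j(-1)}$ sees $X^{j(-1)}$ only through this trace, so that I never need to invert the full symmetric-tensor equation for $X^{i(-1)}$. Here the key is the pointwise identity on the unit sphere
\begin{equation*}
\mathring\Delta\big(\tilde X^i\tilde X^j\big) = 2\mathring\nabla^A\big(\tilde X^i\mathring\nabla_A\tilde X^j\big),
\end{equation*}
both sides equalling $2\delta^{ij}-6\tilde X^i\tilde X^j$ by $\mathring\Delta\tilde X^i=-2\tilde X^i$ and $\mathring\nabla_A\tilde X^i\mathring\nabla^A\tilde X^j=\delta^{ij}-\tilde X^i\tilde X^j$. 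Integrating by parts twice and invoking this identity,
\begin{equation*}
\int_{S^2}\tilde X^i\tilde X^j\mathring\Delta X^{j(-1)} = \int_{S^2} X^{j(-1)}\,\mathring\Delta\big(\tilde X^i\tilde X^j\big) = -2\int_{S^2}\tilde X^i\,\mathring\nabla_A\tilde X^j\mathring\nabla^A X^{j(-1)} = -\int_{S^2}\tilde X^i\operatorname{tr}Q,
\end{equation*}
the last step being the trace relation above. This is the step I expect to be the main obstacle, and it is exactly where the undetermined coefficient $X^{i(-1)}$ disappears.

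With this in hand the left-hand side of the lemma becomes $\int_{S^2}\tilde X^i\operatorname{tr}Q - \tfrac14\int_{S^2}\tilde X^i(\mathring\Delta X^{0(0)})^2$, that is
\begin{equation*}
\int_{S^2}\tilde X^i\Big(\tfrac12 C_{DE}C^{DE} + |\mathring\nabla X^{0(0)}|^2 - |\mathring\nabla X^{i(0)}|^2 - \tfrac14(\mathring\Delta X^{0(0)})^2\Big),
\end{equation*}
and it remains only to reduce the two quadratics. For $|\mathring\nabla X^{i(0)}|^2$ I would substitute $\mathring\nabla_D X^{i(0)}$ from \eqref{linearizedEmbedding}, separate its tangential ($\propto\mathring\nabla_A\tilde X^i$) and normal ($\propto\tilde X^i$) parts, and contract using $\mathring\nabla_A\tilde X^i\mathring\nabla_B\tilde X^i=\mathring\sigma_{AB}$, $\tilde X^i\mathring\nabla_A\tilde X^i=0$, and $\tilde X^i\tilde X^i=1$; writing $C_{AB}=F_{AB}+\Fb_{AB}$, this accounts for the $\Fb_{DE}\Fb^{DE}$, $\mathring\nabla_A\mathring\nabla_B\cb\,\mathring\nabla^A\mathring\nabla^B\cb$, $\mathring\nabla^B F_{BD}\,\mathring\epsilon^{DA}\mathring\nabla_A\cb$, and $\mathring\nabla_A\cb\,\mathring\nabla^A\cb$ contributions. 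For the $X^{0(0)}$ quadratics I would use \eqref{X00} together with $(\mathring\Delta+2)M=2m$, the standard potential identities $\mathring\nabla^A F_{AB}=\tfrac12\mathring\nabla_B(\mathring\Delta+2)c$, $\mathring\nabla^A\mathring\nabla^B C_{AB}=\tfrac12\mathring\Delta(\mathring\Delta+2)c$, and $\mathring\nabla^A\mathring\nabla^B\Fb_{AB}=0$, and repeated integration by parts against $\tilde X^i$; this accounts for the $-m^2$, $\tfrac12 m(\mathring\Delta+2)c$, $\tfrac12 m\mathring\nabla^A\mathring\nabla^B C_{AB}$, and $-\tfrac1{16}(\mathring\nabla^A\mathring\nabla^B C_{AB})^2$ contributions. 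Collecting all terms yields the stated identity; this final reduction is lengthy but purely mechanical, the genuine content being the elimination of $X^{i(-1)}$ above.
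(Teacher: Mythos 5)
Your proposal is correct, and its overall skeleton matches the paper's: derive the second-order term of the isometric embedding equation \eqref{IE} with source $Q_{AB}$ (the paper calls it $\delta\sigma_{AB}$, with the identical expression), eliminate the undetermined coefficient $X^{i(-1)}$ by pairing against first eigenfunctions, then mechanically reduce the quadratics via \eqref{linearizedEmbedding}, \eqref{X00}, and $(\mathring\Delta+2)M=2m$. Where you genuinely diverge is the elimination step, and your device is different from (and arguably cleaner than) the paper's. The paper posits the decomposition $X^{j(-1)}=\alpha^A\na_A\tilde X^j+\beta\tilde X^j$, feeds it into the full tensorial second-order equation, and computes the pointwise identity $-\tilde X^j\mathring\Delta X^{j(-1)}=\mathring\sigma^{AB}\delta\sigma_{AB}-(\mathring\Delta+2)\beta$, after which $(\mathring\Delta+2)\tilde X^i=0$ annihilates the $\beta$-term under the integral. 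You avoid any ansatz: integrating by parts twice and invoking $\mathring\Delta(\tilde X^i\tilde X^j)=2\na^A(\tilde X^i\na_A\tilde X^j)$ (both sides indeed equal $2\delta^{ij}-6\tilde X^i\tilde X^j$), you reduce $\int_{S^2}\tilde X^i\tilde X^j\mathring\Delta X^{j(-1)}$ to $-2\int_{S^2}\tilde X^i\na_A\tilde X^j\na^A X^{j(-1)}$ and then use only the $\mathring\sigma^{AB}$-trace of the second-order equation, $\na_A\tilde X^j\na^A X^{j(-1)}=\tfrac12\operatorname{tr}Q$. I verified that both routes deliver the same intermediate, $\int_{S^2}\tilde X^i\lt(\operatorname{tr}Q-\tfrac14(\mathring\Delta X^{0(0)})^2\rt)$, so the arguments agree from that point on. What your version buys is that it makes manifest that only the trace of the second-order embedding equation ever enters the lemma, with no need to introduce or interpret $\alpha^A$ and $\beta$; what the paper's version buys is a pointwise formula for $-\tilde X^j\mathring\Delta X^{j(-1)}$ that isolates exactly which piece of $X^{j(-1)}$ (the normal component $\beta$) is discarded. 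One caveat: you compress the bulk of the paper's proof---the explicit computation of $\mathring\sigma^{AB}\delta\sigma_{AB}$ after substituting \eqref{linearizedEmbedding} and \eqref{X00}, and the $M$-term integrations by parts---into ``purely mechanical,'' but your stated ingredient list ($C_{AB}=F_{AB}+\Fb_{AB}$, $\na^AF_{AB}=\tfrac12\na_B(\mathring\Delta+2)c$, $\na^A\na^B\Fb_{AB}=0$, $(\mathring\Delta+2)M=2m$) is precisely what the paper uses, and your term accounting (e.g., $\tfrac12 C_{DE}C^{DE}$ from $\operatorname{tr}Q$ recombining into the $\tfrac14 C_{DE}C^{DE}+\tfrac14\Fb_{DE}\Fb^{DE}$ of the statement) is consistent, so this is a fair deferral rather than a gap.
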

\begin{proof}
Suppose $X^{j(-1)} = \alpha^A \na_A \tilde X^j + \beta \tilde X^j$.  With this notation and the surface metric expansion \eqref{metricExpansion}, the second order term of the isometric embedding equation \eqref{IE} takes the form
\begin{align*}
&\na_A \alpha_B + \na_B \alpha_A + 2 \beta \mathring{\sigma}_{AB} \\
&= \frac{1}{4}(C_{DE}C^{DE}) \mathring{\sigma}_{AB} - \na_A X^{j(0)} \na_B X^{j(0)} + \na_A X^{0(0)} \na_B X^{0(0)} =: \delta\sigma_{AB}.
\end{align*}
We compute
\[ - \tilde X^j \mathring{\Delta} X^{j(-1)} = \mathring\sigma^{AB} \delta\sigma_{AB} - (\mathring{\Delta}+2)\beta. \]

Substituting for $X^{i(0)}$ \eqref{linearizedEmbedding}, we get
\begin{align*}
\mathring\sigma^{AB} \delta\sigma_{AB} &= \frac{1}{2}(C_{AB}C^{AB}) -\frac{1}{4}(F_{AB}F^{AB}) -\frac{1}{4} \na^A F_{AD} \na^B F_B^D - \frac{1}{2} (F_{AB} \Fb^{AB}) \\
&- \frac{1}{4} \mathring\nabla_{A}\mathring\nabla_{B}\cb\mathring\nabla^{A}\mathring\nabla^{B}\cb
- \frac{1}{2} \na^B F_{BD} \mathring\epsilon^{DA} \na_A \cb - \frac{1}{4}\mathring\nabla_{A}\cb\mathring\nabla^{A}\cb + |\na X^{0(0)}|^2. 
\end{align*}
Noting that
\[ \int_{S^2} \tilde X^i \lt( |\na X^{0(0)}|^2 - \frac{1}{4} (\mathring\Delta X^{0(0)})^2 \rt) = \int_{S^2} -\frac{1}{4} \tilde X^i \lt( (\mathring\Delta + 2) X^{0(0)}\rt)^2, \]
we substitute linearized optimal isometric embedding equation \eqref{firstOrder} for $(\mathring\Delta + 2)X^{0(0)}$ to obtain
\begin{align*}
&\int_{S^2} \tilde X^i \lt( - \tilde X^j \mathring{\Delta} X^{j(-1)} - \frac{1}{4} \lt( \mathring{\Delta} X^{0(0)}\rt)^2 \rt) \\
&= \int_{S^2} \tilde X^i \Bigg( \frac{1}{4}(C_{DE}C^{DE}) + \frac{1}{4}(\Fb_{DE}\Fb^{DE}) -\frac{1}{4}\mathring\nabla_{A}\mathring\nabla_{B}\cb\mathring\nabla^{A}\mathring\nabla^{B}\cb - \frac{1}{2} \na^B F_{BD} \mathring\epsilon^{DA} \na_A \cb\Bigg)\\
&\quad +\int_{S^2}\tilde{X}^{i}\Bigg(- \frac{1}{4} \mathring\nabla_{A}\cb\mathring\nabla^{A}\cb -\frac{1}{16} \lt( \na^A \na^B C_{AB}\rt)^2 -m^2 + \frac{1}{2} m  (\mathring{\Delta} + 2) c + \frac{1}{2} m \na^A \na^B C_{AB} \Bigg)\\
&\quad - \int_{S^2} \tilde X^i \lt( \frac{1}{4} \na^A F_{AD}\na^B F_B^D + \frac{1}{16} \lt( (\mathring\Delta +2)c \rt)^2 + \frac{1}{8} \na^A\na^B C_{AB} (\mathring\Delta+2)c \rt).
\end{align*}
Recall $\na^B F_B^D = \frac{1}{2} \na^D(\mathring\Delta + 2)c$ and the last integral vanishes via integration by parts:
\begin{align*}
&\int_{S^2} \tilde X^i \lt( \frac{1}{4} \na^A F_{AD}\na^B F_B^D + \frac{1}{16} \lt( (\mathring\Delta +2)c \rt)^2 + \frac{1}{8} \na^A\na^B C_{AB} (\mathring\Delta+2)c \rt)\\
&= \int_{S^2} -\frac{1}{8} \na^D \tilde X^i \na^A F_{AD} (\mathring\Delta+2)c + \frac{1}{16} \tilde X^i \lt( (\mathring\Delta+2)c\rt)^2\\
&= \int_{S^2} - \frac{1}{32} \na^D \tilde X^i \na_D \lt( (\mathring\Delta+2)c \rt)^2 + \frac{1}{16} \tilde X^i \lt( (\mathring\Delta+2)c \rt)^2\\
&=0.
\end{align*}
This completes the proof.

\end{proof}

Applying the mean curvature expansions \eqref{H} and \eqref{H0} together with Lemma \ref{X(-1)}, we find
\begin{align*}
&\int_{S^2} \tilde X^i \lt( |H_0|^{(-3)} - |H|^{(-3)}\rt) \\
&= \int_{S^2} \left[\na^A \tilde X^i N_A + \frac{1}{2} \tilde X^i m (\mathring\Delta + 2) c  + \frac{1}{8}\tilde{X}^{i}(\mathring\Delta +2)(C_{DE}C^{DE})\right]\\
&+ \int_{S^2} \tilde X^i \lt( \frac{1}{4}(\Fb_{DE}\Fb^{DE}) -\frac{1}{4} (\mathring\nabla_{A}\mathring\nabla_{B}\cb\mathring\nabla^{A}\mathring\nabla^{B}\cb) - \frac{1}{2} \na^B F_{BD} \mathring\epsilon^{DA} \na_A \cb - \frac{1}{4}\mathring\nabla^{A}\cb\mathring\nabla_{A}\cb \rt)\\
&+ \int_{S^2} \tilde X^i \lt( -\frac{1}{2} \na_A C^{AB} \na_D \Fb_B^D + \frac{1}{2} \na_A \lt( C^{AB} \na^D \Fb_{DB} \rt) - \frac{1}{2} C^{AB} \Fb_{AB} + \frac{1}{16} \mathring\nabla_{A}\mathring\Delta \cb \mathring\nabla^{A}\mathring\Delta \cb \rt).
\end{align*}
Note that \begin{align*}
\int_{S^2} -\frac{1}{4} \tilde X^i (\mathring\nabla_{A}\mathring\nabla_{B}\cb\mathring\nabla^{A}\mathring\nabla^{B}\cb)
&= \int_{S^2} \left[\frac{1}{4} \na_A \tilde X^i \na_B \cb \na^A \na^B \cb + \frac{1}{4} \tilde X^i \na_B \cb \mathring\Delta \na^B \cb\right]\\
&= \int_{S^2} \left[\frac{1}{2} \tilde X^i \na_{A}\cb\na^{A}\cb + \frac{1}{4} \tilde X^i \na_B \cb \na^B \mathring\Delta \cb\right]
\end{align*}
and hence
\begin{align*}
&\int_{S^2} \tilde X^i \lt[ -\frac{1}{4} \na_A \na_B \cb \na^A \na^B \cb - \frac{1}{4} \na^A \cb \na_A \cb + \frac{1}{16} \na_A \mathring\Delta \cb \na^A \mathring\Delta \cb \rt]\\
&= \int_{S^2} \frac{1}{16} \tilde X^i \na_A (\mathring\Delta + 2)\cb \na^A (\mathring\Delta + 2)\cb\\
&= \int_{S^2} \frac{1}{4} \tilde X^i \na_A \Fb^{AB} \na^D \Fb_{BD}. 
\end{align*}
Moreover, by \eqref{contract_codazzi}, we have
\begin{align*}
\int_{S^2} \tilde X^i \lt( \na^B F_{BD} \mathring\epsilon^{DA} \na_A \cb + F^{AB} \Fb_{AB} \rt) &= \int_{S^2} - \na^B \tilde X^i F_{BD} \mathring\epsilon^{DA} \na_A \cb\\
&= \int_{S^2} -\frac{1}{2} \cb\na^B \tilde X^i \mathring\epsilon_{BA} \na^A (\mathring\Delta + 2) c \\
&= \int_{S^2} -\frac{1}{2} \tilde X^i \na^B (\mathring\Delta + 2) c\ \mathring\epsilon_{BA} \na^A \cb\\
&= \int_{S^2} - \tilde X^i \na^B F_{BD} \mathring\epsilon^{DA} \na_A \cb
\end{align*}
and hence
\begin{align*}
\int_{S^2} -\frac{1}{2} \tilde X^i \na^B F_{BD} \mathring\epsilon^{DA}\na_A \cb = \int_{S^2} \frac{1}{4} \tilde X^i F^{AB} \Fb_{AB}.
\end{align*}
These two reductions imply
\begin{align*}
&\int_{S^2} \tilde X^i \lt( |H_0|^{(-3)} - |H|^{(-3)} \rt)\\
&= \int_{S^2} \lt[ \na^A \tilde X^i N_A + \frac{1}{2} \tilde X^i m (\mathring\Delta + 2)c \rt]\\
&\quad+ \int_{S^2} \tilde X^i \lt( \frac{1}{4} \na_A \Fb^{AB} \na^D \Fb_{BD} -\frac{1}{2} \na_A C^{AB} \na_D \Fb_B^D + \frac{1}{2} \na_A \lt( C^{AB} \na^D \Fb_{DB} \rt) - \frac{1}{4} C^{AB} \Fb_{AB} \rt). 
\end{align*}

The next lemma evaluates the second part of the center-of-mass integral \eqref{comFirst}. 
\begin{lemma}\label{last_two_terms}
\begin{align*}
\int_{S^2} &\left[2mX^{i(0)}- 2 \na^A \tilde X^i X^{0(0)} j_A^{(-1)}\right] \\
= \int_{S^2} &\Big[\frac{1}{2} \tilde X^i \na_D F^{AD} \na^B \Fb_{AB} - \na_A \tilde X^i c \na^A m + 2 \tilde X^i c m \\
&- \frac{1}{2} \tilde X^i (\mathring\Delta c) m + 2 \mathring\epsilon^{AB} \na_A \tilde X^i (\na_B \cb) m\Big].
\end{align*}
\end{lemma}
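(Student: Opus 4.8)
The plan is to substitute the closed forms \eqref{linearizedEmbedding} for $X^{i(0)}$ and \eqref{X00} for $X^{0(0)}$, but first to reduce the current to a manageable shape. Using the linearized optimal isometric embedding equation \eqref{firstOrder} in the equivalent form $(\mathring\Delta+2)X^{0(0)} = 2m - \frac{1}{4}(\mathring\Delta+2)(\mathring\Delta+2)c$, together with the decomposition $C_{AB} = F_{AB} + \Fb_{AB}$, the divergence identity $\na^B F_{AB} = \frac{1}{2}\na_A(\mathring\Delta+2)c$, and the vanishing of the double divergence $\na^A\na^B\Fb_{AB}=0$, the expansion \eqref{jDef} collapses to the purely odd-parity expression
\[ j_A^{(-1)} = -\frac{1}{2}\na^B\Fb_{AB}, \]
which is manifestly divergence-free, as required by \eqref{firstOrder}. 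I would record this simplification as the first step, since it controls the whole calculation.

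Next I would split the integrand of \eqref{comFirst} under consideration into the two pieces $2mX^{i(0)}$ and $-2\na^A\tilde X^i X^{0(0)} j_A^{(-1)}$. For the first, substituting \eqref{linearizedEmbedding} and integrating by parts with the eigenfunction identities $\mathring\Delta\tilde X^i = -2\tilde X^i$ and $\na_A\na_B\tilde X^i = -\mathring\sigma_{AB}\tilde X^i$ produces directly the even-parity terms $-\na_A\tilde X^i\,c\,\na^A m + 2\tilde X^i cm - \frac{1}{2}\tilde X^i(\mathring\Delta c)m$, together with a single copy of the odd-parity term $\int_{S^2} m\,\mathring\epsilon^{AB}\na_A\tilde X^i\na_B\cb$ (the latter already appears in final form and needs no integration by parts).

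For the second piece I note that $-2\cdot(-\tfrac12)=+1$, so it equals $\int_{S^2}\na^A\tilde X^i\,X^{0(0)}\,\na^B\Fb_{AB}$. Inserting $X^{0(0)} = M - \frac14(\mathring\Delta+2)c + h$, the constant $h$ drops by divergence-freeness of $j^{(-1)}$. For the $-\frac14(\mathring\Delta+2)c$ contribution, one integration by parts moving $\na^A$ onto the rest, using $\na^A\na^B\Fb_{AB}=0$ and then $\na^A(\mathring\Delta+2)c = 2\na_D F^{AD}$, yields exactly $\frac12\tilde X^i\na_D F^{AD}\na^B\Fb_{AB}$. The remaining $M$-contribution $\int_{S^2}M\,\na^A\tilde X^i\,\na^B\Fb_{AB}$ is handled by writing $\na^B\Fb_{AB} = \frac12\mathring\epsilon_A{}^{D}\na_D(\mathring\Delta+2)\cb$ (a consequence of \eqref{interchangeFive} and the remark following Proposition \ref{interchange}) and recognizing $V^D := \mathring\epsilon^{DA}\na_A\tilde X^i$ as a rotation Killing field, so that $\na_D V^D=0$. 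After integrating $\na_D$ by parts and using self-adjointness of $\mathring\Delta+2$, this contribution supplies the second copy of $\int_{S^2} m\,\mathring\epsilon^{AB}\na_A\tilde X^i\na_B\cb$, which combines with the first piece to give the coefficient $2$ in the target. Summing the even- and odd-parity contributions yields the claimed identity.

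The main obstacle is precisely this $M$-contribution. The raw expression carries $M$ and the operator $(\mathring\Delta+2)$ acting on $\cb$, whereas the target carries $m$ and $\na_B\cb$; to reconcile them one must observe that, because $V$ is Killing, it commutes with $\mathring\Delta$, so that $(\mathring\Delta+2)(V^D\na_D M) = V^D\na_D\bigl((\mathring\Delta+2)M\bigr) = 2V^D\na_D m$ by \eqref{MDef}, converting $M$ into $m$. Getting this commutation, the parity bookkeeping, and the sign conventions of $\mathring\epsilon$ in \eqref{interchangeFive} all correct is where errors are most likely to enter; everything else is routine integration by parts against the eigenfunctions $\tilde X^i$.
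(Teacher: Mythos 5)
Your proposal is correct and takes essentially the same route as the paper: the paper's proof likewise first reduces the current to $j_A^{(-1)} = -\tfrac{1}{2}\mathring\nabla^{D}\underline{F}_{DA}$ by substituting \eqref{X00} (equivalently, using \eqref{firstOrder}) into \eqref{jDef}, and then obtains the identity by inserting \eqref{linearizedEmbedding} and \eqref{X00} and integrating by parts. Your detailed treatment of the $M$-contribution---recognizing $\mathring\epsilon^{DA}\mathring\nabla_{A}\tilde X^{i}$ as a Killing field that commutes with $\mathring\Delta$ and invoking \eqref{MDef} to convert $M$ into $m$, yielding the second copy of the $\mathring\epsilon^{AB}\mathring\nabla_{A}\tilde X^{i}(\mathring\nabla_{B}\underline{c})m$ term---correctly fills in the steps the paper leaves implicit.
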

\begin{proof}
Recall \eqref{jFirst} \[ j_A^{(-1)} = -\frac{1}{2} \na^B \Fb_{AB} = -\frac{1}{4} \mathring\epsilon_{AB}\na^B (\mathring\Delta + 2)\cb \] and note that for any functions $f$ and $g$
\begin{align}\label{by_parts_epsilon}
\int_{S^2} \na^A \tilde X^i f \mathring\epsilon_{AB} \na^B (\mathring\Delta+2)g = \int_{S^2} \na^A \tilde X^i (\mathring\Delta+2)f \mathring\epsilon_{AB} \na^B g.
\end{align} The assertion follows from the linearized optimal isometric embedding equation \eqref{firstOrder} and the expressions for $X^{i(0)}$ \eqref{linearizedEmbedding}.
\end{proof}

In summary, the center-of-mass integral is equal to (up to the factor $\frac{1}{8\pi e}$)
\begin{align}
\begin{split}
&\int_{S^2} \na^A \tilde X^i \lt( N_A - c \na_A m + 2 \mathring\epsilon_{AB} (\na^B \cb)m \rt) + 3 \tilde X^i cm\\
&+ \int_{S^2} \tilde X^i \lt( -\frac{1}{4} \na_A \Fb^{AB} \na^D \Fb_{BD} + \frac{1}{2} \na_A \lt( C^{AB} \na^D \Fb_{DB} \rt) - \frac{1}{4} C^{AB} \Fb_{AB} \rt) \label{pre_com}
\end{split}
\end{align}
The last integral will be simplified by the next lemma. 
\begin{lemma}
Per Proposition \ref{interchange}, we have
\begin{align}
\int_{S^2} \tilde X^i \na_A \lt( F^{AB} \na^D F_{DB}\rt) &= \int_{S^2} \frac{1}{2} \tilde X^i F_{AB} F^{AB}, \label{integral_identity_FF}\\
\int_{S^2} \tilde X^i \na_A \lt( \Fb^{AB} \na^D \Fb_{DB}\rt) &= \int_{S^2} \frac{1}{2} \tilde X^i \Fb_{AB} \Fb^{AB},\label{integral_identity_FbFb}\\
\int_{S^2} \tilde X^i \na_A \lt( F^{AB} \na^D \Fb_{DB}\rt) &= \int_{S^2} \tilde X^i \na_A \lt( \Fb^{AB} \na^D F_{DB}\rt) \notag \\
&= - \int_{S^2} \tilde X^i \na_A F^{AB} \na^D \Fb_{DB}. \label{integral_identity_FFb}
\end{align}
\end{lemma}
\begin{proof} Using \eqref{contract_codazzi}, we have
\begin{align*}
\int_{S^2} \tilde X^i \na_A \lt( \Fb^{AB} \na^D \Fb_{DB} \rt) &= \int_{S^2} - \na_A \tilde X^i \Fb^{AB} \na^D \Fb_{DB}\\
&= \int_{S^2} -\frac{1}{2} \na_A \tilde X^i \Fb^{AB} \mathring\epsilon_{BD} \na^D (\mathring\Delta + 2)\cb\\
&= \int_{S^2} \frac{1}{2} \na_A \tilde X^i \mathring\epsilon_{BD} \na^D \Fb^{BA} (\mathring\Delta + 2)\cb\\
&= \int_{S^2} \frac{1}{4} \na_A \tilde X^i \na^A (\mathring\Delta + 2)\cb (\mathring\Delta + 2)\cb\\
&= \int_{S^2} \frac{1}{4} \tilde X^i \lt( (\mathring\Delta + 2)\cb \rt)^2,
\end{align*}
and \begin{align*}
&\int_{S^2}  \tilde X^i \Fb^{AB}\Fb_{AB} \\
&= \int_{S^2}  \tilde X^i \Fb^{AB} \mathring\epsilon_{AD} \na_B \na^D \cb\\
&= \int_{S^2} \left[ -\na^D \tilde X^i \mathring\epsilon_{AD} \Fb^{AB} \na_B \cb - \frac{1}{2} \tilde X^i \na^B (\mathring\Delta + 2) \cb \na_B \cb\right]\\
&= \int_{S^2} \frac{1}{2} \left[  \na^D \tilde X^i \na_D (\mathring\Delta + 2) \cb \cdot \cb  + \na^B \tilde X^i (\mathring\Delta + 2) \cb \na_B \cb + \tilde X^i (\mathring\Delta + 2) \cb \mathring\Delta \cb\right]\\
&= \int_{S^2} \frac{1}{2} \tilde X^i \lt( (\mathring\Delta + 2) \cb \rt)^2.
\end{align*}
This proves \eqref{integral_identity_FbFb}. Identity \eqref{integral_identity_FF} is proved similarly using \eqref{contract_codazzi}.

For \eqref{integral_identity_FFb}, we have
\begin{align*}
\int_{S^2} \tilde X^i \na_A \lt( F^{AB} \na^D \Fb_{DB}\rt) &= \int_{S^2} -\frac{1}{2} \na_A \tilde X^i F^{AB} \mathring\epsilon_{BD} \na^D (\mathring\Delta + 2) \cb \\
&= \int_{S^2} -\frac{1}{4} \na_A \tilde X^i \mathring\epsilon^{AB} \na_B (\mathring\Delta + 2) c (\mathring\Delta + 2)\cb\\
&= \int_{S^2} -\tilde X^i \na^D F_{BD} \na_A \Fb^{AB},
\end{align*}
using \eqref{contract_codazzi} and 
\begin{align*}
\int_{S^2} \tilde X^i \na_A \lt( \Fb^{AB} \na^D F_{DB}\rt)
&= \int_{S^2} - \frac{1}{2} \na_A \tilde X^i \Fb^{AB} \na_B (\mathring\Delta + 2)c \\
&= \int_{S^2} \frac{1}{2} \na_A \tilde X^i \na_B \Fb^{AB} (\mathring\Delta + 2 )c\\
&= \int_{S^2} -\tilde X^i \na^D F_{BD} \na_A \Fb^{AB}.
\end{align*}
\end{proof}

\begin{theorem}\label{main_com}
Suppose $(N,g)$ is a spacetime with Bondi-Sachs expansion \eqref{CWYexpansion}.  Further assuming that $(N,g)$ has vanishing linear momentum at null infinity \eqref{zeroLinMom} and positive Bondi mass, the center-of-mass integral at future null infinity has components
\begin{align}\label{comFinal}
\begin{split}
C^i &= \frac{1}{8\pi e} \int_{S^2} \mathring\nabla^{A}\tilde{X}^{i} \Bigg[ N_A - \frac{1}{4} C_{AB} \na_D C^{DB} - \frac{1}{16} \na_A \lt( C^{DE}C_{DE}\rt) \\
&\qquad\qquad\qquad\qquad\qquad\qquad\qquad\qquad - c \na_A m  + 2 \mathring\epsilon_{AB} (\na^B \cb) m \Bigg]\\
&\quad  + \frac{1}{8\pi e} \int_{S^2}  3 \tilde X^i cm - \frac{1}{4} \tilde X^i \na_A \Fb^{AB} \na^D \Fb_{DB}.
\end{split}
\end{align} 
In particular, if $C_{AB}$ is closed, then
\[C^i =\frac{1}{8\pi e} \int_{S^2} \mathring\nabla^{A}\tilde{X}^{i} \lt( N_A  - c \na_A m \rt) + 3 \tilde X^i cm, \]
thanks to \eqref{integral_identity_FF}. In addition, if $C_{AB}$ is closed and $m$ is angular independent, then
\[C^{i} = \frac{1}{8\pi e} \int_{S^2} \mathring\nabla^{A}\tilde{X}^{i}N_{A}.\]
\end{theorem} 
\begin{proof}
We simplify the last two terms in \eqref{pre_com}: 
\begin{align*}
&\int_{S^2} \tilde X^i \lt( \frac{1}{2} \na_A \lt( C^{AB} \na^D \Fb_{DB} \rt) - \frac{1}{4} C^{AB} \Fb_{AB} \rt) \\
&= \int_{S^2} \tilde X^i \cdot  \frac{1}{4} \na_A \lt( F^{AB} \na^D \Fb_{DB} + \Fb^{AB} \na^D \Fb_{DB} + \Fb^{AB} \na^D \Fb_{DB} \rt) \\
&\quad + \int_{S^2} \tilde X^i \lt( -\frac{1}{4} F^{AB} \Fb_{AB} - \frac{1}{8} \Fb^{AB} \Fb_{AB} \rt)\\
&= \int_{S^2}  \tilde X^i \lt( \frac{1}{4}\na_A\lt( C^{AB} \na^D C_{DB} \rt) - \frac{1}{8}C^{AB}C_{AB} \rt)\\
&= \int_{S^2} \na^A \tilde X^i \lt( -\frac{1}{4} C_{AB} \na_D C^{DB} - \frac{1}{16} \na_A \lt( C^{AB}C_{AB}\rt)\rt)  
\end{align*}
where we used \eqref{integral_identity_FFb} and \eqref{integral_identity_FbFb} in the first and \eqref{integral_identity_FF} in the second equation.
We thus have calculated \eqref{comFinal} assuming an observer $T_0 = (1,0,0,0)$.  More generally, the condition \eqref{zeroLinMom} allows for observers $T_{0,r}$ expanding according to \eqref{T0expansion}.  Following Definition \ref{qlCharge}, we write $T_{0,r} = A_{r}((1,0,0,0))$ for Lorentz transformations $A_{r}$ and measure the quasi-local center-of-mass \eqref{qlCOM} with respect to the Lorentz boost $A_{r}(K_{i,r})$, taking limits as $r$ goes to infinity to recover the components of the center-of-mass integral at future null infinity.

Owing to preservation under Lorentz transformations, the inner product expansion \eqref{innerProdExpansion} remains the same.  Likewise, the density terms in the expansion \eqref{densityFirst} and the current term $j_{A}^{(-1)}$ \eqref{jFirst} are unchanged.  Using the expansion of the Lorentz transformations $A_{r}$ \eqref{boostExpansion}, we have
\[ A_r(K_{i,r}) = x^i\partial_{t} + t\partial_{i} + \frac{1}{r}\left(x^{i} \sum_{j=1}^3 b_{j}\partial_j + tb_i\partial_t + t \sum_{j=1}^3 a_{ij}\partial_{j}\right) + O(r^{-2}),\]
from which application of the projection formula \eqref{projectionFormula} gives the expansion 
\begin{align*}
(A_{r}(K_{i,r}))^{T,B} &=\frac{1}{r}\left(-\mathring\nabla^{B}\left(\tilde{X}^{i}X^{0(0)}\right) + 2\mathring\nabla^{B}\tilde{X}^{i}X^{0(0)} + \tilde{X}^{i}b_j\mathring\nabla^{B}\tilde{X}^{j}\right) \\
&\quad + O(r^{-2}),
\end{align*}
differing from the earlier \eqref{projectionExpansion} in its final term.  The new terms in the center-of-mass component amount to
\begin{align*}
&\frac{1}{16\pi e}\int_{S^2} \tilde{X}^{i}b_{j}\mathring\nabla^{B}\tilde{X}^{j}\mathring\nabla^{D}\underline{F}_{DB}=0,
\end{align*}
by \eqref{jFirst} and Lemma \ref{xixj-co-closed} below. In this way, the earlier calculation of \eqref{comFinal} is preserved.
\end{proof}

\begin{lemma}\label{xixj-co-closed}
For any co-closed symmetric traceless two-tensor $\Fb_{AB},$ we have
\[ \int_{S^2} \tilde X^i \na^B \tilde X^j \na^D \Fb_{DB}=0.\]
\end{lemma}
\begin{proof}
Since closed and co-closed symmetric traceless two-tensors are orthogonal, we have
\begin{align*}
0 &= \frac{1}{2} \int_{S^2} \lt( \na^D \na^B (\tilde X^i \tilde X^j) - \mathring\Delta (\tilde X^i \tilde X^j) \mathring\sigma^{DB} \rt) \Fb_{DB}\\
&= \int_{S^2} \na^D \tilde X^i \na^B \tilde X^j \Fb_{DB} \\
&= - \int_{S^2} \tilde X^i \na^B \tilde X^j \na^D \Fb_{DB}.
\end{align*}
\end{proof}

%%%%%%%%%%%%%%%%%%%%%%%%%%%%%%%
\section{Example: Kerr Spacetime}

\subsection{Singular Bondi-Sachs Coordinates}

Barnich-Troessaert \cite{Barnich} provide a BMS coordinate system $(u,r,\theta,\phi)$ for the Kerr spacetime, such that 
\[g_{rr} = g_{r\theta} = g_{r\theta} = 0,\]
and the spherical part of the metric satisfies the determinant condition \eqref{areaNorm}
\[\det g_{AB} = r^4 \sin^2\theta.\]  
The remaining components expand as 
\begin{align}
\begin{split}
g_{uu} &= -1 + 2Mr^{-1} + O(r^{-2}),\\
g_{ur} &= -1 + a^2(\frac{1}{2}-\cos^2\theta)r^{-2} + O(r^{-3}),\\
g_{u\theta} &= \frac{a\cos\theta}{2\sin^2\theta} + \frac{a\cos\theta}{4}\left(8M + \frac{a}{\sin^3\theta}\right)r^{-1} + O(r^{-2}),\\
g_{u\phi} &= -2aM\sin^2\theta r^{-1} + O(r^{-2}),\\
g_{\theta\theta} &= r^2 + \frac{a}{\sin\theta} r + \frac{a^2}{2\sin^2\theta} + O(r^{-1}),\\
g_{\theta\phi} &= O(r^{-1}),\\
g_{\phi\phi} &= r^2\sin^2\theta - a\sin\theta r + \frac{a^2}{2} + O(r^{-1}),
\end{split}
\end{align}
where $M$ is the mass and $a$ is the angular velocity.  

Regarding the quantities appearing in the expansion \eqref{CWYexpansion}, the mass aspect $m = M$ is constant, while the shear tensor $C_{AB}$ has closed form, with components
\begin{align}\label{shearKerr}
\begin{split}
C_{\theta\theta} &= \frac{a}{\sin\theta},\\
C_{\theta\phi} &= 0,\\
C_{\phi\phi} &= -a\sin\theta,
\end{split}
\end{align}
and potentials
\begin{align}
\begin{split}
c &= -2a\sin\theta,\\
\underline{c} &= 0.
\end{split}
\end{align}

In addition, the angular momentum aspect $N_{A}$ has components
\begin{align}\label{angAspectKerr}
\begin{split}
N_{\theta} &= 3Ma\cos\theta,\\
N_{\phi} &= -3Ma\sin^2\theta.
\end{split}
\end{align}

We note that certain metric components above are singular.  The singular nature of the coordinate system is also apparent in the Bondi-Sachs data; in particular, the shear tensor $C_{AB}$ is singular.

As the mass aspect is constant, the condition \eqref{zeroLinMom} holds, yielding vanishing of the linear momentum at future null infinity.  That is, the spacetime is in center-of-mass frame, and solvability of the linearized optimal isometric embedding equation \eqref{firstOrder} is ensured.  In particular, we find
\[ X^{0(0)} = -\frac{1}{4}(\mathring\Delta +2)c.\]
Considering the next order in the optimal isometric embedding equation, we can directly compute vanishing of the term $S$ in the discussion following \eqref{secondOrder}, using the mass aspect, the angular momentum aspect \eqref{angAspectKerr}, the shear tensor \eqref{shearKerr}, and the form of $X^{0(0)}$ obtained by solving the linearized equation \eqref{firstOrder}.  As a consequence, we consider the observer $T_0 = (1,0,0,0)$, such that the $b_{i} = 0$ in the observer expansion \eqref{T0expansion}.

In calculating the angular momentum and center-of-mass integral at future null infinity, it is helpful to express the first eigenfunctions in spherical coordinates:
\begin{align}
\begin{split}
\tilde{X}^1 &= \sin\theta\cos\phi,\\
\tilde{X}^2 &= \sin\theta\sin\phi,\\
\tilde{X}^3 &= \cos\theta.
\end{split}
\end{align}

Owing to the simplicity of the Kerr spacetime, with constant mass aspect and closed shear tensor, we apply the special case appearing below our general formula for the quasi-local center-of-mass integral at null infinity \eqref{comFinal} to deduce
\[ C^{i} = \frac{1}{8\pi} \int_{S^2} \mathring\nabla^{A}\tilde{X}^{i}N_{A} = 0.\]

On the other hand, the special case appearing below our general formula for the angular momentum \eqref{angmomFinal} yields
\begin{align}
\begin{split}
J^1 &= \frac{1}{8\pi}\int_{S^2} \mathring\epsilon^{AB}\mathring\nabla_{B}\tilde{X}^1N_{A} = 0,\\
J^2 &= \frac{1}{8\pi}\int_{S^2}\mathring\epsilon^{AB}\mathring\nabla_{B}\tilde{X}^{2} N_{A} = 0,\\
J^3 &= \frac{1}{8\pi}\int_{S^2}\mathring\epsilon^{AB}\mathring\nabla_{B}\tilde{X}^{3} N_{A} = -Ma,
\end{split}
\end{align}
where
\[ \mathring\epsilon^{AB}\mathring\nabla_{B}\tilde{X}^3 = \partial_{\phi}\]
in spherical coordinates.  In this way, the calculations are precisely what we expect from the usual presentation of the Kerr spacetime.  

\subsection{Approximate Bondi-Sachs coordinates}
An approximate Bondi-Sachs coordinate system $(\bar{u}, \bar{x}, \bar{\theta}, \bar{\phi})$ is constructed in Chru\`{s}ciel-Jezierski-Kijowski \cite{CJK} with Bondi-Sachs data
\begin{align}
\begin{split}
m &= M,\\
C_{AB} &= 0,\\
N^{\bar{\theta}} &= 0,\\
N^{\bar{\phi}} &= -3Ma.
\end{split}
\end{align}
For more details on the authors' construction, see Appendix C.7 of \cite{CJK}.

The discussion in the previous subsection carries through largely unchanged; in particular, we have vanishing of linear momentum and solvability of the optimal isometric embedding equation, with $X^{0(0)} = 0$ and the observer $T_0 = (1, 0, 0, 0)$.

The center-of-mass calculation is even simpler than in the previous coordinate system, since the present coordinate system has a divergence-free angular momentum aspect.  As before, we find
\[ C^{i} = \frac{1}{8\pi} \int_{S^2} \mathring\nabla^{A}\tilde{X}^{i}N_{A} = 0.\]

Likewise, the angular momentum components are the same:
\begin{align}
\begin{split}
J^1 &= \frac{1}{8\pi}\int_{S^2} \mathring\epsilon^{AB}\mathring\nabla_{B}\tilde{X}^1N_{A} = 0,\\
J^2 &= \frac{1}{8\pi}\int_{S^2}\mathring\epsilon^{AB}\mathring\nabla_{B}\tilde{X}^{2} N_{A} = 0,\\
J^3 &= \frac{1}{8\pi}\int_{S^2}\mathring\epsilon^{AB}\mathring\nabla_{B}\tilde{X}^{3} N_{A} = -Ma.
\end{split}
\end{align}

%%%%%%%%%%%%%%%%%%%%%%%%%%%%%%%%%%%%%%%%%
%\appendix
\appendix
\section{Two Tensor Identities and Expansion of $V$}
In the appendix, we first derive some identities concerning the derivatives of closed and co-closed traceless symmetric two-tensors on $S^2$. Then we  prove the expansion of metric coefficient $V$ claimed in Proposition \ref{V}. 

\begin{proposition}\label{interchange}
Let $C_{AB}$ be a symmetric traceless 2-tensor on $(S^2,\mathring\sigma)$. Then
\begin{align}
\na_A C_{BD} - \na_B C_{AD} &= \na^E C_{BE} \mathring\sigma_{AD} - \na^E C_{AE} \mathring\sigma_{BD} \label{Codazzi}\\
\mathring\epsilon^{AB}\na_A C_{BD} &= \mathring\epsilon_{DA} \na_B C^{AB} \label{contract_codazzi}
\end{align}
\end{proposition}
\begin{proof}
The second identity follows by contracting the first one by $\mathring\epsilon^{AB}$. It suffices to verify the first identity for an orthornomal frame $e_1,e_2$. For $A = e_1, B =e_2, D=e_1$, we have
\begin{align*}
\na_1 C_{21} - \na_2 C_{11} = \na_1 C_{12} + \na_2 C_{22} = \na^E C_{E2};
\end{align*}
for $A = e_1, B= e_2, D=e_2$, we have
\begin{align*}
\na_1 C_{22} - \na_2 C_{12} = -\na_1 C_{11} - \na_2 C_{21} = -\na^E C_{E1}.
\end{align*}
This proves the assertion.
\end{proof}

\begin{proposition}
Let $C_{AB}$ be a traceless symmetric two-tensor on $S^2$. Then we have
\begin{align}
&\na^D \na_A C_{BD} + \na^D \na_B C_{AD} - \mathring\Delta C_{AB} = \mathring\sigma_{AB} \na^D \na^E C_{DE} + 2 C_{AB},\label{interchangeThree}
\end{align}
\end{proposition}
\begin{proof}
We work at the potential level and compute
\begin{align*}
\na^D \na_A C_{BD} &= \na_A \na^D C_{BD} - R^{D\;\;E}_{\;\;A\;\;B}C_{ED} - R^{D\;\;E}_{\;\;A\;\;D}C_{BE}\\
&= \frac{1}{2}\na_A \na_B(\mathring\Delta + 2)c + \frac{1}{2} \epsilon_B^{\;\;D} \na_A\na_D(\Delta + 2)\cb + 2C_{AB}.
\end{align*}
For a function $f$, we have
\[ \mathring\Delta \na_A\na_B f = \na_A\na_B (\mathring\Delta +2) f + 2 \na_A\na_B f -2 \mathring\Delta f \mathring\sigma_{AB}. \]
Hence,
\begin{align*}
\mathring\Delta C_{AB} &= \na_A\na_B (\mathring\Delta + 2) c - \frac{1}{2}\mathring\Delta (\mathring\Delta + 2)c \mathring\sigma_{AB} \\
&\quad + \frac{1}{2} \lt( \mathring\epsilon_A^{\;\;D} \na_D\na_B (\mathring\Delta + 2)\cb + \mathring\epsilon_B^{\;\;D} \na_D\na_A (\mathring\Delta + 2)\cb \rt) + 2 C_{AB}.
\end{align*}
Putting these together, we obtain
\begin{align*}
\na^D\na_A C_{BD} + \na^D\na_B C_{AD} - \mathring\Delta C_{AB} &= \frac{1}{2}\mathring\Delta(\mathring\Delta + 2)c \mathring\sigma_{AB} + 2 C_{AB}\\
&= \mathring\sigma_{AB} \na^D\na^E C_{DE} + 2C_{AB}
\end{align*}  
\end{proof}

\begin{lemma}\label{magical identity} 
We have the following identity:
\begin{equation}
\frac{1}{2} R^{(2)} + \frac{1}{4} \na_A \lt( C^{AB} \na^D C_{BD} \rt) + \frac{1}{16} \mathring{\Delta}(C_{DE}C^{DE})=0,
\end{equation}
where
\begin{align}\label{scalarCurv}
\begin{split}  
R^{(2)} &:= \frac{1}{2}\left(C_{DE}C^{DE}\right) +\frac{1}{2}\mathring\nabla_{A}C_{BD}\mathring\nabla^{B}C^{AD} + \frac{1}{4}\mathring\Delta\left(C_{DE}C^{DE}\right)\\
&- \mathring\nabla_{A}\left(C^{AB}\mathring\nabla^{D}C_{BD} + C^{BD}\mathring\nabla_{B}C_{D}^{A}\right) - \frac{1}{4}\left(\mathring\nabla_{A}C_{BD}\mathring\nabla^{A}C^{BD}\right).
\end{split}
\end{align}
\end{lemma}
\begin{proof}
By Proposition \ref{interchange}, we have
\begin{align}
\na_A C_{BD} \na^A C^{BD} = \na_A C_{BD} \na^B C^{AD} + \na^B C_{AB} \na_D C^{AD}.
\end{align}
Indeed, \begin{align*}
&\na_A C_{BD} \na^A C^{BD} \\
&= \na_A C_{BD} \lt( \na^B C^{AD} - \frac{1}{2} \mathring\epsilon^{AB} \mathring\Delta \na^D \cb + \frac{1}{2} \mathring\epsilon^{DA} \na^B \cb + \na^E F_{BE} \mathring\sigma_{AD} \rt)\\
&= \na_A C_{BD} \na^B C^{AD} + \frac{1}{4} \lt( \na_D (\mathring\Delta + 2) \cb - \mathring\epsilon_{DB} \na^B(\mathring\Delta+2) c \rt) \mathring\Delta \na^D \cb\\
&\quad + \frac{1}{4} \lt( \na_B (\mathring\Delta + 2) \cb\ - \mathring\epsilon_{BD} \na^D(\mathring\Delta + 2)c \rt) \na^B \cb\\
&\quad + \frac{1}{2} \na^E F_{BE} \lt( \na^B(\mathring\Delta + 2) c + \mathring\epsilon^{BD} \na_D (\mathring\Delta + 2) \cb \rt) \\
&= \na_A C_{BD} \na^B C^{AD} + \frac{1}{4} \na_{A}(\mathring\Delta + 2) \cb\na^{A}(\mathring\Delta +2)\cb\\
& + \frac{1}{2} \mathring\epsilon^{BD} \na_B (\mathring\Delta + 2) c \na_D (\mathring\Delta + 2) \cb + \frac{1}{4} \na_{A}(\mathring\Delta +2) c\na^{A}(\mathring\Delta+2)c\\\
&= \na_A C_{BD} \na^BC^{AD} + \na^B C_{AB} \na_D C^{AD}.
\end{align*}
Moreover, contracting \eqref{interchangeThree} with $C^{AB}$, we obtain
\[C^{AB} \mathring\Delta C_{AB} = 2 C^{AB} \na^D\na_A C_{BD} - 2(C_{DE}C^{DE}).\]
We deduce
\begin{align*}
&R^{(2)} + \frac{1}{2} \na_A \lt( C^{AB} \na^D C_{BD}\rt) + \frac{1}{8} \mathring\Delta(C_{DE}C^{DE})\\
&= \frac{1}{2}(C_{DE}C^{DE}) + \frac{3}{4} \lt( \mathring\Delta C_{AB} C^{AB} + \na_A C_{BD} \na^A C^{BD} \rt) + \frac{1}{4} \na_A C_{BD} \na^B C^{AD}\\
&\quad- \frac{1}{4} \na^B C_{AB} \na_D C^{AD} - \frac{1}{2} \na_A \lt( C^{AB} \na^D C_{BD} \rt) - \na_A \lt( C^{BD} \na_B C^A_D \rt)\\
&= \frac{1}{2}(C_{DE}C^{DE})+ \frac{3}{2} C^{AB} \na^D \na_A C_{BD} - \frac{3}{2}(C_{DE}C^{DE}) + \na_A C_{BD} \na^B C^{AD}\\
& + \frac{1}{2} \na^B C_{AB} \na_D C^{AD}  - \frac{1}{2} \na_A \lt( C^{AB} \na^D C_{BD} \rt) - \na_A \lt( C^{BD} \na_B C^A_D \rt)\\
&= - (C_{DE}C^{DE}) + \frac{1}{2} C^{AB} \na^D \na_A C_{BD} -\frac{1}{2} C^{AB} \na_A \na^D C_{BD} \\
&= 0. 
\end{align*}
\end{proof}

We are ready to simplify the expansion of $V$.
\begin{proof}[Proof of Proposition \ref{V}] Per M\"{a}dler-Winicour \cite{MW}, we have the initial data equation
\begin{align}
\begin{split}
\partial_{r}(rV) &= \frac{r^2}{2}UR(\sigma) - \frac{r^2}{2}\Delta U + \frac{r^2}{4U}\nabla_A U\nabla^A U\\
& -\frac{1}{2r^2}\nabla_{A}\left(\partial_{r}(r^4W^{A})\right) - \frac{r^2}{4U}\sigma_{AB}\left(\partial_{r}W^{A}\right)\left(\partial_{r}W^{B}\right),
\end{split}
\end{align}
with $R(\sigma)$ the scalar curvature of $\sigma$.  

It is well-known that the scalar curvature expands as
\[R(\sigma) = \frac{2}{r^2} + \frac{1}{r^3}\mathring\nabla^{D}\mathring\nabla^{E}C_{DE} + \frac{1}{r^4}R^{(2)} + O(r^{-5}). \]
See \cite[Proposition 4]{BM} for example.

Recall
\eqref{CWYexpansion}
\begin{align*}
\begin{split}
W^{A} &= \frac{1}{r^2} W^{(-2)A} + \frac{1}{r^3} W^{(-3)A} + O(r^{-4}) \\
&= \frac{1}{2r^2}\mathring\nabla^{D}C_{D}^{A} + \frac{1}{r^3}\left(\frac{2}{3}N^{A} - \frac{1}{16}\mathring\nabla^{A}\left(C_{DE}C^{DE}\right) - \frac{1}{2}C^{A}_{B}\mathring\nabla^{D}C_{D}^{B}\right) \\
&\quad + O(r^{-4})
\end{split}
\end{align*}
and we get \begin{align*}
\begin{split}
V &= 1 - \frac{2m}{r} - \frac{1}{2r^2}R^{(2)} + \frac{1}{16r^2}\left(C_{DE}C^{DE}\right) - \frac{1}{32r^2}\mathring\Delta\left(C_{DE}C^{DE}\right)\\
&\quad +\frac{1}{4r^2}\left(\frac{4}{3}\mathring\nabla^{A}N_{A} - \frac{1}{8}\mathring\Delta\left(C_{DE}C^{DE}\right)- C_{AB}\mathring\nabla^{A}\mathring\nabla^{D}C_{D}^{B} \right) + O(r^{-3}).
\end{split}
\end{align*}
By Lemma \ref{magical identity}, we get
\begin{align*}
V &= 1 - \frac{2m}{r}\\
&\quad + \frac{1}{4r^2} \na_A C^{AB} \na^D C_{BD} + \frac{1}{16r^2} C_{DE}C^{DE} + \frac{1}{3r^2} \na^A N_A + O(r^{-3}).
\end{align*}
\end{proof}

\section{Decomposition of symmetric traceless 2-tensors on $S^2$}
\begin{theorem}
Let $C_{AB}$ be a symmetric traceless 2-tensor on $(S^2,\mathring\sigma)$ that is divergence-free, $\na^A C_{AB}=0.$ Then $C_{AB}=0.$
\end{theorem}
\begin{proof}
The assertion actually holds for any metric on $S^2,$ based on the fact that $S^2$ has no nontrivial holomorphic 1-form. For standard metric $\mathring\sigma,$ we present an elementary proof following M\"{a}dler-Winicour \cite[page 13-14]{MW}.

Let $Y^A = \mathring\epsilon^{AB} \na_B g$ where $g$ is a $-2$ eigenfunction of $(S^2,\mathring\sigma).$  By the identity $\na_A \na_B g = - g \mathring\sigma_{AB}$, we have $\na^B (Y^A C_{AB})=0$ and hence there exists a function $f$ such that $Y^A C_{AB} = \mathring\epsilon_{BD} \na^D f.$ We compute
\begin{align*}
\int_{S^2} (Y^A C_{AB})(Y^D C_D^B) &= \int_{S^2} \mathring\epsilon^{AE} \na_E g C_{AB} \mathring\epsilon^{BD} \na_D f \\
&= - \int_{S^2} \mathring\epsilon^{AE} \na_E g \na_D C_{AB} \mathring\epsilon^{BD} f\\
&= - \int_{S^2} (\mathring\sigma^{AB}\mathring\sigma^{DE} - \mathring\sigma^{AD}\mathring\sigma^{BE}) \na_E g \na_D C_{AB} f\\
&=0
\end{align*}
where we used the identity $\mathring\epsilon^{AB} \mathring\epsilon_{BD} = -\delta^A_D$ in the second equality. Thus $Y^A C_{AB}=0.$ Since for every point $p$ on $S^2$ and tangent vector $v \in T_p S^2,$ there exists $g$ such that $Y^A = v,$ we conclude that $C_{AB}=0.$
\end{proof}
\begin{theorem}
Let $C_{AB}$ be a symmetric traceless 2-tensor on $(S^2,\mathring\sigma).$ Then \begin{align*}
C_{AB}(u,x^{A}) &= \left(\mathring\nabla_{A}\mathring\nabla_{B} - \frac{1}{2}\mathring\sigma_{AB}\mathring\Delta\right)c(u,x^{D})\\
&+ \frac{1}{2}\left(\mathring{\epsilon}_{AD}\mathring\nabla^{D}\mathring\nabla_{B} + \mathring{\epsilon}_{BD}\mathring\nabla^{D}\mathring\nabla_{A}\right)\underline{c}(u,x^{D}),
\end{align*}
with scalar potentials $c(u,x^{D})$ and $\underline{c}(u,x^{D})$ and $\mathring\epsilon_{AB}$ the area form of the standard unit sphere.
\end{theorem}
\begin{proof}
Let \[ \na^A C_{AB} = \na_B f + \mathring\epsilon_{BD} \na^D g\]
be the Hodge decomposition of 1-form $\na^A C_{AB}$. We decompose $f$ and $g$ into spherical harmonics $f = f_{l=1} + f_{l \ge 2}$ and $g = g_{l=1} + g_{l \ge 2}.$ We first show that $f_{l=1} = g_{l=1}=0$. Indeed, using $\na_A \na_B f_{l=1} = - f_{l=1} \mathring\sigma_{AB}$, we integration by parts to get
\[ 0=\int_{S^2} \na^A C_{AB} \na^B f_{l=1} = \int_{S^2} |\na f_{l=1}|^2. \]
Similarly, we have
\[ 0 = \int_{S^2} \na^A C_{AB} \mathring\epsilon^{BD} \na_D g_{l=1} = \int_{S^2} |\na g_{l=1}|^2, \]
where we used the identity $\mathring\epsilon_{BD} \mathring\epsilon^{BE} = \delta_D^{\;\;E}.$ 

Since $f$ and $g$ both support in the $l \ge 2$ modes, there exist potentials $c$ and $\cb$ such that $\frac{1}{2}(\mathring\Delta + 2) c = f$ and $\frac{1}{2} (\mathring\Delta + 2) \cb = g.$ Direct computation shows that the symmetric traceless 2-tensor
\begin{align*}
\hat C_{AB} &= \left(\mathring\nabla_{A}\mathring\nabla_{B} - \frac{1}{2}\mathring\sigma_{AB}\mathring\Delta\right)c\\
&+ \frac{1}{2}\left(\mathring{\epsilon}_{AD}\mathring\nabla^{D}\mathring\nabla_{B} + \mathring{\epsilon}_{BD}\mathring\nabla^{D}\mathring\nabla_{A}\right)\underline{c}.
\end{align*}
satisfies $\na^A \hat C_{AB} = \na_B f + \mathring\epsilon_{BD} \na^D g = \na^A C_{AB}.$ By the previous theorem, we get $C_{AB} = \hat C_{AB}.$
\end{proof}
%%%%%%%%%%%%%%%%%%%%%%%%%%%%%%%%%%%%%

\bibliographystyle{plain}
\bibliography{AngMomCOMBib}

\end{document}